\documentclass[letterpaper, 10pt, conference]{ieeeconf}      

\IEEEoverridecommandlockouts                              
\overrideIEEEmargins

\newcommand{\ud}{\,\mathrm{d}}

\usepackage{graphics} 
\usepackage{epsfig} 
\usepackage{mathptmx} 
\usepackage{times} 
\usepackage{amsmath} 
\usepackage{amssymb}  
\usepackage{tabularx}
\usepackage{epstopdf}
\usepackage{epic,color}
\usepackage{mathrsfs}

\newtheorem{definition}{Definition}
\newtheorem{theorem}{Theorem}
\newtheorem{lemma}{Lemma}
\newtheorem{remark}{Remark}
\newtheorem{proposition}{Proposition}
\newtheorem{corollary}{Corollary}


\newlength{\noteWidth}
\setlength{\noteWidth}{.75in}
\long\def\notes#1{\ifinner
             {\tiny #1}
             \else
              \marginpar{\parbox[t]{\noteWidth}{\raggedright\tiny #1}}
               \fi}

\def\notes#1{\typeout{#1 !!!}}  

\def\spm#1{\notes{\textcolor{blue}{SPM: #1}}}

\def\pgm#1{\notes{\textcolor{blue}{PGM: #1}}}

\def\ty#1{\notes{\textcolor{blue}{TY:#1}}}


\newcounter{rmnum}
\newenvironment{romannum}{\begin{list}{{\upshape (\roman{rmnum})}}{\usecounter{rmnum}
\setlength{\leftmargin}{14pt}
\setlength{\rightmargin}{8pt}
\setlength{\itemsep}{2pt}
\setlength{\itemindent}{-1pt}
}}{\end{list}}

\newcounter{anum}

\def\eqdef{\mathbin{:=}}
\def\Prob{{\sf P}}
\def\Inov{I}

\def\IEEEQEDclosed{\mbox{\rule[0pt]{1.3ex}{1.3ex}}}

\def\qed{\ifmmode\IEEEQEDclosed\else{\unskip\nobreak\hfil
\penalty50\hskip1em\null\nobreak\hfil\IEEEQEDclosed
\parfillskip=0pt\finalhyphendemerits=0\endgraf}\fi}

\def\ddt{\frac{\ud}{\ud t}}
\def\v{{\sf K}}

\def\Re{\mathbb{R}}
\def\E{{\sf E}}

\def\ind{\text{\rm\large 1}}
\def\varble{\,\cdot\,}



\def\Sec#1{Sec~\ref{#1}}

\def\smopot{{\cal G}}

\def\bfmath#1{{\mathchoice{\mbox{\boldmath$#1$}}%
{\mbox{\boldmath$#1$}}%
{\mbox{\boldmath$\scriptstyle#1$}}%
{\mbox{\boldmath$\scriptscriptstyle#1$}}}}

\def\bfInov{\bfmath{I}}
\def\Inov{I} 

\def\bfPhi{\bfmath{\Phi}}
\def\bfmX{\bfmath{X}}

\def\Expect{{\sf E}}

\def\clD{{\cal D}}

\def\clL{{\cal L}}

\def\clZ{{\cal Z}}

\def\w{{\sf \Omega}}

\def\hah{\hat{h}}


\title{\LARGE \bf
Multivariable Feedback Particle Filter}

\author{Tao Yang, Richard S. Laugesen, Prashant G. Mehta, Sean P. Meyn
\thanks{Financial support from NSF grants EECS-0925534 and
  CCF-08-30776, the Simons foundation grant 204296 (Laugesen), 
  and the AFOSR grant FA9550-09-1-0190 is gratefully acknowledged. 
}
\thanks{T.~Yang and P.~Mehta are with the Coordinated
  Science Laboratory and the Department of Mechanical Science and
  Engineering,
  R.~Laugesen is with the Math department, all
  at the University of Illinois at Urbana-Champaign; S.~Meyn is with
  the Department of Electrical and Computer Engineering at University
  of Florida at Gainesville.
{\tt\scriptsize taoyang1@illinois.edu; mehtapg@illinois.edu;
  laugesen@illinois.edu; meyn@ufl.edu}}
}

\begin{document}

\maketitle
\thispagestyle{empty}

%
%
%
%

\begin{abstract}

In recent work it is shown that importance sampling can be avoided in
the particle filter through an innovation structure inspired by
traditional nonlinear filtering combined with Mean-Field Game
formalisms~\cite{huacaimal07,YinMehMeyShan10}.  
The resulting \textit{feedback
  particle filter} (FPF) offers significant variance improvements;
in particular, the algorithm can be applied to systems that are not
stable.  The filter comes with an up-front computational cost to
obtain the filter gain.  This paper describes new representations and
algorithms to compute the gain in the general multivariable
setting.   The main contributions are,
\begin{romannum}
\item   
Theory surrounding the FPF is improved:  Consistency is established in
the multivariate setting, as well as well-posedness of the associated
PDE to obtain the filter gain.

\item
The gain can be expressed as the gradient of a function, which is
precisely the solution to Poisson's equation for a related MCMC
diffusion (the Smoluchowski equation).   This provides a bridge to
MCMC as well as to approximate optimal filtering approaches such as
TD-learning, which can in turn be used to approximate the gain.

\item
Motivated by a weak formulation of Poisson's equation, a Galerkin finite-element algorithm is proposed for approximation of the gain.  Its performance is illustrated in numerical experiments.

\end{romannum}


%
\end{abstract}

\def\fp{(\ref{eqn:Signal_Process}, \ref{eqn:Obs_Process})}

\section{Introduction}
\label{sec:intro}

In a recent work, we introduced a new feedback control-based
formulation of the particle filter for the nonlinear filtering problem~\cite{YangMehtaMeyn11ACC},\cite{YangMehtaMeyn11CDC}.
The resulting filter is referred to as the {\em feedback particle
  filter}.  In~\cite{YangMehtaMeyn11ACC},\cite{YangMehtaMeyn11CDC}, the filter was described for the scalar case,
where the signal and the observation processes are both real-valued.
The aim of this paper is to generalize the scalar results of our
earlier papers to the multivariable filtering problem:
\begin{subequations}
\begin{align}
\ud X_t &= a(X_t)\ud t + \ud B_t,
\label{eqn:Signal_Process}
\\
\ud Z_t &= h(X_t)\ud t + \ud W_t,
\label{eqn:Obs_Process}
\end{align}
\end{subequations}
where $X_t\in\Re^d$ is the state at time $t$, $Z_t \in\Re^m$ is the
observation process, $a(\varble)$, $h(\varble)$ are $C^1$
functions, and $\{B_t\}$, $\{W_t\}$ are mutually independent
Wiener processes of appropriate dimension.  The covariance matrix of
the observation noise $\{W_t\}$ is assumed to be positive definite.
The function $h$ is a column vector whose $j$-th coordinate is denoted as $h_j$ (i.e.,
$h=(h_1,h_2,\hdots,h_m)^T$).  For
notational ease, the process noise $\{B_t\}$ is assumed to be a
standard Wiener process. By scaling, we may assume without loss of generality that the covariance matrices associated with $\{B_t\}$, $\{W_t\}$ are identity matrices.
\spm{Note that I have made $R=I$}

The objective of the filtering problem is to estimate the
posterior distribution of $X_t$ given the history $\clZ_t :=
\sigma(Z_s:  s \le t)$. The posterior is denoted by $p^*$, so
that for any measurable set $A\subset \Re^d$,
\begin{equation}
\int_{x\in A} p^*(x,t)\, \ud x   = \Prob\{ X_t \in A\mid \clZ_t \}.
\nonumber
\end{equation}
The filter is infinite-dimensional since it
defines the evolution, in  the space of probability measures,
of $\{p^*(\varble ,t) : t\ge 0\}$.  If $a(\varble)$, $h(\varble)$ are linear functions, the solution is given by the
finite-dimensional Kalman filter.
The article~\cite{budchelee07} surveys numerical methods to approximate the nonlinear filter.
One approach described in this survey is particle filtering.

\spm{needed?? via sequential importance sampling,
where particles are generated according to their importance weight at every time step~
}

The particle filter is a simulation-based algorithm to approximate the
filtering task~\cite{DouFreGor01}. The key step is the
construction of $N$ stochastic processes $\{X^i_t : 1\le i \le N\}$:
The value $X^i_t \in \Re^d$ is the state for the $i^{\text{th}}$
particle at time $t$. For each time $t$, the empirical distribution
formed by, the ``particle population'' is used to approximate the
posterior distribution.  Recall that this is  defined for any measurable set $A\subset\Re^d$ by,
\begin{equation}
p^{(N)}(A,t) = \frac{1}{N}\sum_{i=1}^N \ind\{ X^i_t\in A\}.\nonumber
\end{equation}
A common approach in particle filtering is called {\em
sequential importance sampling}, where particles are generated
according to their importance weight at every time
step~\cite{baincrisan07,DouFreGor01}.

In our earlier papers~\cite{YangMehtaMeyn11ACC},\cite{YangMehtaMeyn11CDC}, an alternative feedback control-based
approach to the construction of a particle filter was introduced; see
also~\cite{CriXiong09,MitterNewton04,DaumHuang10,Coleman_Allerton_2011,peqagusingom11}
for related approaches.  The resulting particle filter, referred to as the feedback particle
filter, was described for the scalar filtering problem (where
$d=m=1$).  The main result of this paper is to describe the feedback
particle filter for the multivariable filtering
problem~\eqref{eqn:Signal_Process}-\eqref{eqn:Obs_Process}:

The particle filter is a controlled system.  The dynamics of the
$i^{\text{th}}$ particle have the following gain feedback form,
\begin{equation}
\begin{aligned}
\ud X^i_t = a& ( X^i_t) \ud t + \ud B^i_t
\\
& + \v(X^i_t,t) \ud \Inov^i_t +\w(X^i_t,t) \ud t, \end{aligned}
\label{eqn:particle_filter_nonlin_intro}
\end{equation}
where $\{B^i_t\}$ are mutually independent  standard Wiener processes,
$\bfInov^i$ is similar to the \textit{innovation process} that appears
in the nonlinear filter,
\begin{equation}
\ud \Inov^i_t \eqdef \ud Z_t - \frac{1}{2}    (h(X^i_t) + \hat{h}) \ud t,
\label{e:in_intro}
\end{equation}
where $\hat{h} := \E[h(X_t^i)|\mathcal{Z}_t]$. In a numerical
implementation, we approximate $\hat{h} \approx
\frac{1}{N}\sum_{i=1}^N h(X_t^i) =: \hat{h}^{(N)}$.

The gain function $\v$ is obtained as a solution to  an Euler-Lagrange boundary value problem (E-L BVP):  For $j = 1, 2,
\hdots, m$,  the function $\phi_j$ is  
a solution to the second-order differential equation,
\begin{equation}
\label{eqn:EL_phi_intro}
\begin{aligned}
\nabla \cdot (p(x,t) \nabla \phi_j(x,t) ) & = - (h_j(x)-\hat{h}_j) p(x,t),\\
\int \phi_j(x,t) p(x,t) \ud x & = 0,
\end{aligned}
\end{equation}
where $p$ denotes the conditional distribution of $X_t^i$ given
$\mathcal{Z}_t$.   In terms of these solutions, the gain function is given by
\begin{equation}
[\v]_{lj} = 
\frac{\partial \phi_j} {\partial x_l} \, .
\label{eqn:gradient_gain_fn_intro}
\end{equation}
Note that the gain function needs to be obtained for each value of time $t$.

Finally, $\w = \left(\w_1,\w_2,...,\w_d\right)^T$ is the Wong-Zakai correction term:
\begin{equation}
\w_l(x,t) := \frac{1}{2} \sum_{k=1}^d \sum_{s=1}^m
								\v_{ks}(x,t)  \frac{\partial \v_{ls}}{\partial x_{k}}(x,t).
\label{eqn:wong_term_intro}
\end{equation}
The controlled system~\eqref{eqn:particle_filter_nonlin_intro}-\eqref{eqn:wong_term_intro}
is called the multivariable feedback particle filter.

The contributions of this paper are as follows:

\smallskip

\noindent {$\bullet$ \bf Consistency.}
The feedback particle filter~\eqref{eqn:particle_filter_nonlin_intro} is
consistent with the nonlinear filter,
\spm{I shortened this bullet}
given consistent initializations  $p(\varble,0)=p^*(\varble,0)$.
Consequently, if the initial conditions $\{X^i_0\}_{i=1}^N$  are drawn from the initial distribution $p^*(\varble,0)$ of
$X_0$, then, as $N\rightarrow\infty$, the empirical distribution of the particle system approximates the posterior distribution $p^*(\varble,t)$ for each $t$.
\smallskip

\noindent {$\bullet$ \bf Well-posedness.}
\spm{If $h$ has polynomial growth, and $|\log(p)|$ quadratic growth,
  then existence and uniqueness hold.   And, the growth rate of
  $\phi_j$ coincides with that of $h_j$}
\pgm{Perhaps easier and more intuitive conditions are in terms of Sobolev norms}
A weak formulation of~\eqref{eqn:EL_phi_intro} is introduced, and used
to prove an existence-uniqueness result for $\phi_j$ in a suitable
function space.  Certain apriori bounds are derived for the gain
function to show that the resulting  control input
in~\eqref{eqn:particle_filter_nonlin_intro} is admissible (That is,
the filter~\eqref{eqn:particle_filter_nonlin_intro} is well-posed in the It\^{o} sense).


\begin{figure*}
    \centering
 \includegraphics[scale=0.31]{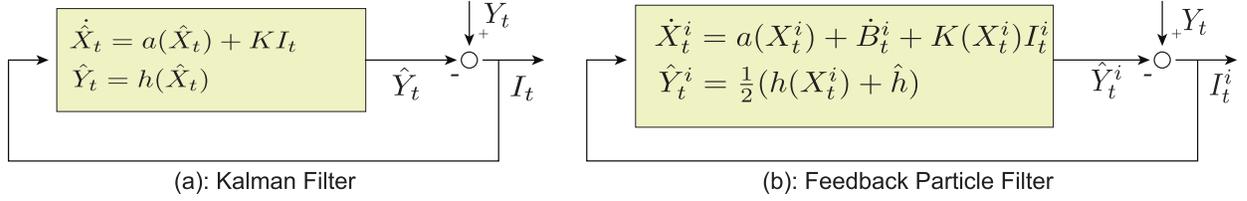}
   \vspace{-0.1in}
\caption{Innovation error-based feedback structure for (a) Kalman
  filter and (b) nonlinear feedback particle filter (see Remark~\eqref{rem:rem1}).
}\vspace{-.4cm}
    \label{fig:fig_FPF_KF}
\end{figure*}

\smallskip

\noindent {$\bullet$ \bf Numerical algorithms.}
Based on the weak formulation, a Galerkin finite-element algorithm is
proposed for approximation of the gain function $\v(x,t)$.  The
algorithm is completely adapted to data (That is, it does not require
an explicit approximation of $p(x,t)$ or computation of derivatives).
Certain closed-form expressions for gain function are derived in
certain special cases.
The conclusions are  illustrated with  numerical examples.

\noindent {$\bullet$ \bf Characterization of the feedback gain.}
The \textit{Smoluchowski equation} models  a $d$-dimensional gradient flow with ``noise'':
\begin{equation}
  \ud \Phi_t = - \nabla \smopot(\Phi_t) \ud t    +   \sqrt{2} \ud \xi_t,
\label{e:smo}
\end{equation}
where $\xi$ is a standard Wiener process.  It is regarded as the
original MCMC algorithm:  Under general conditions it is ergodic, with
(unnormalized) stationary distribution $e^{-\smopot}$.  
The BVP~\eqref{eqn:EL_phi_intro} can be expressed as an instance of
Poisson's equation for this diffusion,
\begin{equation}
\clD \phi_j = -(h_j-\hah_j)\,, \quad 1\le j\le m \,,
\label{e:fishphi}
\end{equation}
where   $\clD$ is the differential generator for the Smoluchowski equation, with potential  $\smopot=-\log p$.
Subject to growth conditions on $h$ and $p$,  this implies the mean-integral representation for the vector-valued function,
\begin{equation}
\phi_j(x) =  \int_0^\infty \Expect[ h_j(\Phi_t) -\hah_j \mid \Phi_0=x] \, \ud t.
\label{e:fishInt}
\end{equation}
This representation also suggests an alternate proof of well-posedness
and construction of numerical algorithms; cf.,~\cite{glymey96a}.
This will be the subject of future work.
\spm{I have a dozen papers on this -- not sure where to start}
\pgm{For well-posedness of {e:smo} one would require C^1 smoothness of
  \nabla \smopot; which in turn would require p is C^2 -- that is what
Rick assumed in his proofs}

\smallbreak




The outline of the remainder of this paper is as follows.  The
nonlinear filter is introduced and shown to be consistent
in~\Sec{sec:MFPF}.  The weak formulation of the BVP appears
in~\Sec{sec:wellposed} where well-posedness results are also derived.
Algorithms are discussed in~\Sec{sec:algo} and a numerical example in~\Sec{sec:numerics}.


\section{Multivariable Feedback Particle Filter}
\label{sec:MFPF}

Consider the continuous time filtering problem~\fp~introduced in \Sec{sec:intro}.

We denote as $p^*(x,t)$ the conditional distribution of $X_t$
given $\clZ_t = \sigma(Z_s:s\le t)$.  The evolution of   $p^*(x,t)$ is described by the Kushner-Stratonovich (K-S) equation:
\spm{Too kiss-ass:  Recall that the celebrated}
\begin{equation}
\ud p^\ast = \clL^\dagger p^\ast \ud t + ( h-\hat{h} )^T
(\ud Z_t - \hat{h} \ud t)p^\ast,
\label{eqn:Kushner_eqn}
\end{equation}
where $ \hat{h} = \int h(x) p^*(x,t) \ud x$ and $\clL^\dagger p^\ast =
- \nabla \cdot (pa) + \frac{1}{2}\Delta p$, where $\Delta$ denotes the
Laplacian in $\Re^d$.

\subsection{Belief state dynamics \&\ control architecture}

The model for the particle filter is
given by,
\begin{equation}
\ud X^i_t = a(X^i_t) \ud t + \ud B^i_t +  \underbrace{u(X^i_t, t) \ud t + \v(X^i_t,t) \ud Z_t}_{\ud U_t^i} , \label{eqn:particle_model}
\end{equation}
where $X^i_t \in \Re^d$ is the state for the $i^{\text{th}}$
particle at time $t$, and $\{B^i_t\}$ are mutually independent
standard Wiener processes.  We assume the initial conditions
$\{X^i_0\}_{i=1}^N$  are i.i.d., independent of $\{B^i_t\}$,
and drawn from the initial
distribution $p^*(x,0)$ of $X_0$.  Both  $\{B^i_t\}$ and $\{X^i_0\}$
are also assumed to be independent of $X_t,Z_t$. Note that the gain function $\v(x,t)$ is a $d\times m$ matrix and $u(x,t)\in\Re^d$.

We impose admissibility requirements on
the control input $U^i_t$ in~\eqref{eqn:particle_model}:

\begin{definition}[Admissible Input]
The control input $U^i_t$ is {\em admissible} if the random variables $u(x,t) $
and $\v(x,t)$ are  $\clZ_t = \sigma(Z_s:s\le t)$ measurable for each
$t$.   And for each $t$, $ \Expect[|u|] := \Expect[ \sum_{l} |
u_l(X^i_t,t) | ]<\infty$ and $\Expect[|\v|^2] := \Expect [ \sum_{lj} |
\v_{lj}(X^i_t,t) |^2 ]<\infty$.
\qed
\end{definition}

Recall that there are two types of conditional distributions of
interest in our analysis:
\begin{enumerate}
\item $p(x,t)$:  Defines the conditional dist.\ of $X^i_t$
    given $\clZ_t$.
\item $p^*(x,t)$: Defines the conditional dist.\ of $X_t$
    given $\clZ_t$.
\end{enumerate}
The functions $\{ u(x,t),\v(x,t)\}$ are said to be
\textit{optimal} if $p\equiv p^*$.  That is,
given $p^*(\cdot,0)= p(\cdot,0)$, our goal is to choose
$\{u,\v\}$ in the feedback particle filter so that the
evolution equations of these conditional distributions coincide
(see \eqref{eqn:Kushner_eqn} and \eqref{eqn:mod_FPK}).

The evolution equation for the belief state is described in the next
result.  The proof is identical to the proof
in the scalar case (see Proposition~2 in~\cite{YangMehtaMeyn11CDC}).
It is omitted here.
\begin{proposition}
\label{thm:FPK}
Consider the process $X^i_t$ that evolves according to the particle
filter model~\eqref{eqn:particle_model}.  The conditional distribution
of $X^i_t$ given the filtration $\clZ_t$, $p(x,t)$, satisfies
the forward equation
\begin{equation}
\begin{aligned}
\ud p = \clL^\dagger p \ud t  &- \nabla \cdot (p\v) \ud Z_t
\\
&- \nabla \cdot (pu) \ud t +
\frac{1}{2}\sum_{l,k=1}^d \frac{\partial^2}{\partial x_l \partial
  x_{k}} \left( p [\v
  \v^T]_{lk} \right) \ud t.
  \end{aligned}
\label{eqn:mod_FPK}
\end{equation}
\qed
\end{proposition}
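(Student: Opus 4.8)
The plan is to prove the identity in weak form, testing against an arbitrary $f\in C_c^\infty(\Re^d)$, and then to recover the stated divergence form by integration by parts. Write $\hat g_t \eqdef \Expect[g(X^i_t)\mid\clZ_t]=\int g(x)p(x,t)\ud x$ for the conditional mean of a (possibly time-dependent) function $g$, and set $\clL f = a\cdot\nabla f+\tfrac12\Delta f$ (the generator adjoint to $\clL^\dagger$) and $q_t(x)=\sum_{l,k}(\partial^2 f/\partial x_l\partial x_k)\,[\v\v^T(x,t)]_{lk}$. It then suffices to establish, for every such $f$,
\begin{equation*}
\ud\hat f_t = \widehat{\clL f}\,\ud t + \widehat{\nabla f\cdot u}\,\ud t + \widehat{(\nabla f)^T\v}\,\ud Z_t + \tfrac12\,\widehat{q}\,\ud t .
\end{equation*}
Matching this against $\ud\int f p = \int f\,\ud p$ for all $f$ and integrating by parts — once for the first-order terms and twice for the second-order term, discarding boundary terms by compact support — reproduces $\clL^\dagger p$, $-\nabla\cdot(p\v)$, $-\nabla\cdot(pu)$, and $\tfrac12\sum_{l,k}(\partial^2/\partial x_l\partial x_k)(p\,[\v\v^T]_{lk})$, which is exactly~\eqref{eqn:mod_FPK}.

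To obtain this identity I would apply It\^o's formula to $f(X^i_t)$ along~\eqref{eqn:particle_model}. Its drift collects $\nabla f\cdot(a+u)$ together with the second-order term $\tfrac12\sum_{l,k}(\partial^2 f/\partial x_l\partial x_k)\,\ud\langle X^i_l,X^i_k\rangle_t$. Because the martingale part of $X^i$ consists of $\ud B^i_t$ and the contribution of $\v\,\ud Z_t$, and $\{B^i_t\}$ is independent of the (standard) martingale part of $Z$, the quadratic covariation is $\ud\langle X^i_l,X^i_k\rangle_t=(\delta_{lk}+[\v\v^T]_{lk})\,\ud t$; the $\delta_{lk}$ piece supplies the $\tfrac12\Delta f$ that completes $\clL f$, while the $[\v\v^T]_{lk}$ piece supplies the term $\tfrac12 q$. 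The two martingale differentials are $\nabla f\cdot\ud B^i_t$ and $(\nabla f)^T\v\,\ud Z_t$. I would then take $\Expect[\,\cdot\mid\clZ_t]$ of the resulting integral equation. Since $\{B^i_t\}$ and $\{X^i_0\}$ are independent of $\clZ_t$, the stochastic integral against $\ud B^i$ is a mean-zero $\clZ_t$-conditional martingale and drops out, leaving the three $\ud t$ terms and the $\ud Z$ term.

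The step I expect to be the main obstacle is justifying that $\Expect[\,\cdot\mid\clZ_t]$ passes through the $\ud Z$ integral without producing a correction, i.e.\ that $\Expect\big[\int_0^t(\nabla f(X^i_s))^T\v(X^i_s,s)\,\ud Z_s\mid\clZ_t\big]=\int_0^t\widehat{(\nabla f)^T\v}\,\ud Z_s$. This is delicate because the observation noise $W$ in $\ud Z_t=h(X_t)\ud t+\ud W_t$ is not $\clZ_t$-measurable, so the interchange is a conditional (Fujisaki--Kallianpur--Kunita type) It\^o argument rather than a naive factorization; one must check that the cross-variation contributions cancel and that the admissibility hypotheses $\Expect[|u|]<\infty$ and $\Expect[|\v|^2]<\infty$ (with $u,\v$ being $\clZ_t$-measurable coefficient fields) render every term integrable and every interchange of $\Expect[\,\cdot\mid\clZ_t]$ with the time- and $\ud Z$-integrals legitimate. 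The structural reason no innovation/gain correction survives here --- in contrast to the Kushner--Stratonovich equation~\eqref{eqn:Kushner_eqn} --- is that the filtration $\clZ_t$ being conditioned on is generated by the very process $Z$ that drives the particle, whereas the particle's own noise $B^i$ is independent of it. Granting the interchange, collecting the four surviving terms gives the weak identity and the integration by parts finishes the argument; this is the multivariable analogue of the scalar computation (Proposition~2 of~\cite{YangMehtaMeyn11CDC}).
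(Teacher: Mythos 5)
Your proposal is correct and follows essentially the same route as the proof the paper defers to (Proposition~2 of~\cite{YangMehtaMeyn11CDC}): It\^o's formula for $f(X^i_t)$, the quadratic-variation bookkeeping $\ud\langle X^i_l,X^i_k\rangle_t=(\delta_{lk}+[\v\v^T]_{lk})\,\ud t$, conditional expectation given $\clZ_t$, and integration by parts to pass to the stated divergence form. The only refinement worth noting is that the interchange you flag as the main obstacle is most cleanly justified not by an FKK-type innovations computation but by conditioning on the entire observation path: since $(X^i_0,B^i)$ is independent of $(X,Z)$ and $X^i_t$ is a measurable functional of $(X^i_0,B^i_{[0,t]},Z_{[0,t]})$, conditioning on $\clZ_\infty$ and on $\clZ_t$ agree for such functionals, and the $\ud Z$-integral then factors through the expectation over the independent pair $(X^i_0,B^i)$ by a stochastic Fubini argument, with no correction term surviving.
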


\subsection{General Form of the Feedback Particle Filter}

The general form of the feedback particle filter is obtained by
choosing $\{u,\v\}$ as the solution to a certain E-L BVP based on
$p$.  The function $\v$ is a solution to
\begin{equation}
\nabla \cdot (p \v) = - (h-\hat{h})^T
 p,
\label{e:bvp_divergence_multi}
\end{equation}
and the function $u$ is obtained as
\spm{W should be $\w$, right??}
\begin{equation}
u(x,t) = -\frac{1}{2} \v(x,t) \left( h(x) + \hat{h} \right) + \w(x,t).
\label{eqn:u_intermsof_v*}
\end{equation}
The reader is referred to our earlier paper~\cite{YangMehtaMeyn11CDC} for additional justification regarding these choices.
\spm{choices?
\\
And, this  is not useful:
-- where the E-L
BVP~\eqref{e:bvp_divergence_multi} is obtained starting from a
variational formulation -- .
\\
and, the remark isn't very helpful -- we can reduce later
}

\pgm{The remark is useful for other papers, where we work with ODE
  formulation.  Note that a lot of (most?) users of filtering are not really
  comfortable with SDEs.  The related Stratonovich formulation may also be more
  suitable for implementation.}

\begin{remark}\label{rem:rem1}
Substituting~\eqref{e:bvp_divergence_multi}-\eqref{eqn:u_intermsof_v*}
into~\eqref{eqn:particle_model} gives the feedback particle filter
model~\eqref{eqn:particle_filter_nonlin_intro}-\eqref{e:in_intro} in
\Sec{sec:intro}.

In the Stratonovich form, the filter admits a simpler representation,
\begin{equation}
\ud X^i_t = a(X^i_t) \ud t + \ud B^i_t +  \v(X^i,t) \circ \left( \ud Z_t - \frac{1}{2}    (h(X^i_t) + \hat{h}) \ud t\right). \nonumber
\end{equation}
Given that the Stratonovich form provides a mathematical
interpretation of the (formal) ODE model \cite[Section 3.3 of the SDE
text by {\O}ksendal]{Oksendal_book}, we also obtain the (formal) ODE
model of the filter. Denoting $Y_t \doteq \frac{\ud Z_t}{\ud t} $ and white noise
process $\dot{B}^i_t \doteq \frac{\ud B_t^i}{\ud t} $, the ODE model of the
filter is given by,
\begin{equation}
\frac{\ud X^i_t}{\ud t} = a(X^i_t) + \dot{B}^i_t +  \v(X^i,t) \cdot \left( Y_t - \frac{1}{2}    (h(X^i_t) + \hat{h})\right). \nonumber
\end{equation}
The feedback particle filter thus provides   a generalization of the
Kalman filter to nonlinear systems, where the innovation error-based
feedback structure of the control is preserved (see
Fig.~\ref{fig:fig_FPF_KF}).  For the linear case, it is shown in
\Sec{sec:LG} that the gain function
is the Kalman gain.  For the nonlinear case, the Kalman gain is replaced by a nonlinear function of the state.
\qed
\end{remark}
\spm{Note that I changed QED to qed for a reason -- the QED symbols was all over the place!}

If one further {\em assumes} that the control input $U^i_t$ is
admissible, a short calculation shows that the feedback particle
filter is consistent with the choice of $\{u,\v\}$ given
by~\eqref{e:bvp_divergence_multi}-\eqref{eqn:u_intermsof_v*}.  This
calculation appears in Appendix~\ref{apdx:pf_Kushner}.

\spm{I do not agree with this:
The remainder of
the paper thus is about showing that $U^i_t$ is admissible.  This
requires additional assumptions.
\\
I moved part of the statement to the next subsection
}




\subsection{Consistency with the Nonlinear Filter}

To establish admissibility of the input $U^i_t$ requires additional assumptions on the density $p$ and function $h$:
\spm{Too bad we have to assume smoothness -- The paper  {konmey09a} has references on smoothness
}
\pgm{Note to Sean: {knomey09a} may also require $C^1$ smoothness of the drift term
  \nabla \smopot(\Phi)?  This is the same as $\smopot$ below being $C^2.$
  \\ 
  PGM:  We justify this following Kunita}
\begin{romannum}
\item {\textbf{\textit{Assumption~A1}}}
The probability density $p(x,t)$ is of the form $p(x,t) = e^{-\smopot(x,t)}$,
where $\smopot(x,t)$ is a twice continuously
differentiable function 
with
\begin{equation} \label{eqn:Poincare_assump}
|\nabla \smopot|^2 - 2\Delta \smopot \rightarrow \infty \quad \text{as $|x| \rightarrow \infty$.}
\end{equation}

\spm{$D^2$ and $\nabla$ looks a bit ugly.  What's wrong with $\nabla^2$?
This only comes up 3 times, so it isn't a big deal}
\pgm{$\nabla^2$ invites confusion with Laplacian $\Delta$.  Old story.}

\item {\textbf{\textit{Assumption~A2}}}
The function $h$ satisfies,
\[
\int |h(x)|^2 p(x,t) \ud x < \infty,
\]
where $|h(x)|^2:= \sum_{j} |h_j(x)|^2$.
For admissibility of $u$, our arguments require additional assumptions:
\pgm{Note to Rick:  Regularity theory on finite domain sugest
  A3 and A4 are not really needed.  May be include a remark following
  Theorem 1?}

\item {\textbf{\textit{Assumption~A3}}}
The second derivatives of $\smopot(x,t)$ with respect to $x$ are
uniformly bounded at each $t$, i.e., $|\frac{\partial^2
  \smopot}{\partial x_j \partial x_k}(x,t)| \le c_2(t)$ 
for all $x\in\Re^d$, $t>0$.
\item {\textbf{\textit{Assumption~A4}}}
The first (weak) derivatives of $h$ satisfy
\[
\int |\nabla h(x)|^2 p(x,t) \ud x < \infty,
\]
where $|\nabla h(x)|^2:= \sum_{jk} |\frac{\partial h_j}{\partial x_k} (x)|^2$.
\qed
\pgm{Notation 1, 2 norm will have to be centralized; this is not efficient.}
\end{romannum}

Under these assumptions, it is shown in Theorem~\ref{thm:thm1} that the gradient-form \eqref{eqn:EL_phi_intro}
of the E-L BVP~\eqref{e:bvp_divergence_multi} is uniquely obtained to
give $\phi$ and thence $\v$.
\spm{This seemed odd,
"a
particular `gradient-form'
solution"
given this in the next sentences,
"The BVP for the gradient form solution has already appeared
in~\eqref{eqn:EL_phi_intro} in \Sec{sec:intro}. "
" In this case, $\v$
obtained as~\eqref{eqn:gradient_gain_fn_intro} is a solution of
the E-L BVP~\eqref{e:bvp_divergence_multi}. "
\\
I've attempted to tighten it up.
}

The admissibility of the resulting control input is established in Corollary~\ref{cor:cor1}.
Theorem~\ref{thm:thm1} and Corollary~\ref{cor:cor1} are stated and proved in \Sec{sec:wellposed}.

The following theorem then shows that the two evolution equations
\eqref{eqn:Kushner_eqn} and \eqref{eqn:mod_FPK} are identical.
The proof appears in Appendix~\ref{apdx:pf_Kushner}.

\begin{theorem}\label{thm:kushner}
Consider the two evolution equations for $p$ and $p^*$, defined
according to the solution of the forward
equation~\eqref{eqn:mod_FPK} and the K-S
equation~\eqref{eqn:Kushner_eqn}, respectively.  Suppose that
the gain function $\v(x,t)$ is obtained
according to~\eqref{eqn:EL_phi_intro}-\eqref{eqn:gradient_gain_fn_intro}.
Then, provided $p(\varble,0)=p^*(\varble ,0)$, we have for all $t\ge 0$,
\[
p(\varble,t) = p^*(\varble,t).
\]
\qed
\end{theorem}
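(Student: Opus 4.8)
The plan is to show that, after substituting the gain choices, the forward equation~\eqref{eqn:mod_FPK} for $p$ coincides term-by-term with the Kushner--Stratonovich (K-S) equation~\eqref{eqn:Kushner_eqn} for $p^*$; since both are nonlinear evolution equations of the same (McKean--Vlasov) type, driven by the common observation path $\{Z_t\}$ and sharing the initial condition $p(\varble,0)=p^*(\varble,0)$, uniqueness of solutions then forces $p(\varble,t)=p^*(\varble,t)$ for all $t$. Throughout, $\hat h$ is read as the conditional mean under $p$, namely $\hat h=\int h(x)\,p(x,t)\ud x$, so that the $p$-equation is closed in exactly the same manner as the $p^*$-equation.

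First I would dispose of the martingale term. By the divergence constraint~\eqref{e:bvp_divergence_multi}, $-\nabla\cdot(p\v)=(h-\hat h)^T p$, so the $\ud Z_t$ contribution $-\nabla\cdot(p\v)\ud Z_t$ of~\eqref{eqn:mod_FPK} is already $(h-\hat h)^T p\,\ud Z_t$, matching the martingale term of~\eqref{eqn:Kushner_eqn}. It then remains to show that the remaining $\ud t$ terms of~\eqref{eqn:mod_FPK}, beyond $\clL^\dagger p$, namely $-\nabla\cdot(pu)+\tfrac12\sum_{l,k}\partial_{x_l}\partial_{x_k}(p[\v\v^T]_{lk})$, collapse to the single correction $-(h-\hat h)^T\hat h\,p$ appearing in~\eqref{eqn:Kushner_eqn}.

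The crux is to fold the Wong--Zakai term~\eqref{eqn:wong_term_intro} into the second-order (diffusion) term. Writing $\v_{\cdot s}$ for the $s$-th column of $\v$ and setting $q^s:=\nabla\cdot(p\,\v_{\cdot s})$, a product-rule expansion of $\partial_{x_l}\partial_{x_k}(p\,\v_{ls}\v_{ks})$ yields
\[
\tfrac12\sum_{l,k}\partial_{x_l}\partial_{x_k}\big(p[\v\v^T]_{lk}\big)=\tfrac12\sum_{s}\nabla\cdot(q^s\,\v_{\cdot s})+\nabla\cdot(p\w),
\]
precisely because $\w_l=\tfrac12\sum_{k,s}\v_{ks}\,\partial_{x_k}\v_{ls}$. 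Substituting $u=-\tfrac12\v(h+\hat h)+\w$ from~\eqref{eqn:u_intermsof_v*} produces a second copy of $\nabla\cdot(p\w)$ of opposite sign, so the two cancel, leaving $\tfrac12\nabla\cdot\big(p\,\v(h+\hat h)\big)+\tfrac12\sum_s\nabla\cdot(q^s\,\v_{\cdot s})$. Inserting $q^s=-(h_s-\hat h_s)\,p$ from~\eqref{e:bvp_divergence_multi} and combining the two divergences column-by-column, the $h_s$-dependent parts cancel and the constants $\hat h_s$ factor out, giving $\sum_s\hat h_s\,\nabla\cdot(p\,\v_{\cdot s})=-p\sum_s\hat h_s(h_s-\hat h_s)=-(h-\hat h)^T\hat h\,p$, as required. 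Hence the $p$-equation is identical to~\eqref{eqn:Kushner_eqn}.

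The main obstacle is not this algebra but the justification of the manipulations: I must know that $p\v$, $p[\v\v^T]$ and $p\w$ are integrable and (weakly) differentiable, and that the integration-by-parts underlying the divergence identities produces no boundary flux. These facts are exactly what Assumptions~A1--A4 buy, through the well-posedness and apriori bounds of Theorem~\ref{thm:thm1} and Corollary~\ref{cor:cor1}, which also guarantee that the gradient-form gain $\v=[\partial\phi_j/\partial x_l]$ of~\eqref{eqn:EL_phi_intro}--\eqref{eqn:gradient_gain_fn_intro} is a bona fide admissible control solving~\eqref{e:bvp_divergence_multi}. Worth noting is that the gradient structure is used only to secure existence of such a $\v$: the consistency computation above appeals solely to the divergence constraint, so any admissible $\v$ with $\nabla\cdot(p\v)=-(h-\hat h)^Tp$ would do. The proof then closes with the standard uniqueness theorem for the K-S equation, which upgrades the equality of the two evolution equations to the equality $p\equiv p^*$.
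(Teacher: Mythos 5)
Your proof is correct and follows essentially the same route as the paper's: both arguments reduce the claim to the algebraic identity $\tfrac12\sum_k\partial_{x_k}\bigl(p[\v\v^T]_{lk}\bigr)=\tfrac12\bigl[\v(\nabla\cdot(p\v))^T\bigr]_l+\w_l p$, cancel the Wong--Zakai term against part of the second-order diffusion term, and then use the divergence constraint $\nabla\cdot(p\v)=-(h-\hat h)^Tp$ to recover the martingale term and the drift correction $-(h-\hat h)^T\hat h\,p$ of the K-S equation. The only cosmetic difference is the direction in which that identity is read (you expand the diffusion term column-by-column via $q^s=\nabla\cdot(p\,\v_{\cdot s})$, while the paper assembles it starting from $-up$), and you are slightly more explicit than the paper about invoking uniqueness for the K-S equation to pass from identical SDEs to $p\equiv p^*$.
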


\section{Existence, Uniqueness and Admissibility}
\label{sec:wellposed}

The aim of this section is to introduce a particular {\em
  gradient-form solution} of the BVP~\eqref{e:bvp_divergence_multi}.
The gradient-form solution is obtained in terms of $m$ real-valued
functions $\{\phi_1(\varble,t),\phi_2(\varble,t),\hdots,\phi_m(\varble,t)\}$.  For
$j=1,2,\hdots,m$, the function $\phi_j$ is a solution to,
\begin{equation}
\begin{aligned}
\nabla \cdot (p(x,t) \nabla \phi_j(x,t) ) & = - (h_j(x)-\hat{h}_j)p(x,t),
\\
\int \phi_j(x,t) p(x,t) \ud x & = 0.
\label{eqn:EL_phi}
\end{aligned}
\end{equation}
The normalization $\int \phi_j(x,t) p(x,t) \ud x   = 0  $ is for
convenience:  If $\phi_j^o$ is an solution to the differential equation \eqref{eqn:EL_phi},
we obtain the desired normalization  on subtracting its mean.
\spm{needed to explain normalization}

In terms of these solutions, the gain function is given by,
\begin{equation}
\v_{lj} (x,t)=  \frac{\partial \phi_j} {\partial x_l} \,(x,t)\,, \quad x\in\Re^d\, .
\label{eqn:gradient_gain_fn}
\end{equation}
It is straightforward to verify that $\v$ thus defined is a particular
solution of the BVP~\eqref{e:bvp_divergence_multi}.
\ty{Not sure if it's correct: $\nabla \phi$ is $1\times d$ matrix?
\\
 phi is an m-dim vector if there are m observations --spm}

\subsection{Poisson's Equation Interpretation}

The differential equation \eqref{eqn:EL_phi} is solved for each $t$ to give the $m$ functions $\{\phi_j (\varble,t) : 1\le j\le m\}$.
On dividing each side of this equation by $p$,    elementary calculus leads to the equivalent equation
\eqref{e:fishphi}, with generator $\clD$ defined for $C^2$ functions $f$ via,
\begin{equation}
\clD f = - \nabla \smopot  \cdot \nabla f   +   \Delta f
\nonumber
\end{equation}
and with $\smopot(\varble)=-\log p(\varble,t)$.
This is the differential generator for the Smoluchowski equation
\eqref{e:smo}.  It is shown in \cite{huimeysch04a} that this diffusion
is exponentially ergodic under mild conditions on $\smopot$.
Consequently,  $ \Expect[ h_j(\Phi_t) -\hah_j \mid \Phi_0=x] $
converges to zero exponentially fast, subject to growth conditions on
$h_j$,  and from this we can conclude that \eqref{e:fishInt} is well
defined, and provides a solution to Poisson's equation
\eqref{e:fishphi} \cite{glymey96a}.

Poisson's equation can be regarded as the \textit{value function} that
arises in average-cost optimal control, and this is the object of
interest in the approximation techniques used in TD-learning for
average-cost optimal control \cite{CTCN}.  The integral representation
\eqref{e:fishInt} suggests approximation techniques based on
approximate models for the diffusion $\bfPhi$.
\spm{What more can I put in from my notes?}

\medbreak

The remainder of this section is devoted to showing existence and
uniqueness of the solution of \eqref{eqn:EL_phi}, and admissibility of
the resulting control input, obtained using gain function defined
by~\eqref{eqn:gradient_gain_fn}.

\subsection{Weak Formulation}

Further analysis of this problem requires introduction of Hilbert
spaces: $L^2(\Re^d;p)$ is used to denote the Hilbert space of functions
on $\Re^d$ that are square-integrable with respect to density
$p(\cdot,t)$ (for a fixed time $t$);
$H^k(\Re^d;p)$ is used to denote the Hilbert space of
functions whose first $k$-derivatives (defined in the weak sense)
are in $L^2(\Re^d;p)$.
Denote
\[
H_0^1(\Re^d;p) := \left\{ \phi\in H^1(\Re^d;p) \,\Big|\, \int
\phi(x) p(x,t) \ud x =0 \right\}.
\]

A function $\phi_j \in H_0^1(\Re^d;p)$ is said to be a weak solution of
the BVP~\eqref{eqn:EL_phi} if
\begin{equation}
\int \nabla \phi_j(x,t) \cdot \nabla \psi(x) p(x,t) \ud x = \int  (h_j(x)-\hat{h}_j) \psi(x) p(x,t) \ud x,\label{eqn:EL_phi_weak}
\end{equation}
for all $\psi \in H^1(\Re^d;p)$.

Denoting $\Expect [\cdot]:= \int \cdot p(x,t) \ud x$, the weak form of the
BVP~\eqref{eqn:EL_phi}  can also be expressed as
\begin{equation}
\Expect [\nabla \phi_j \cdot \nabla \psi ] = \Expect[  (h_j
-\hat{h}_j) \psi],\quad\forall\,\,\psi\in H^1(\Re^d;p).
\label{eqn:EL_phi_expect}
\end{equation}
This representation is useful for the numerical algorithm described in
\Sec{sec:algo}.

\subsection{Main Results}


The existence-uniqueness result for the BVP~\eqref{eqn:EL_phi} is
described next --- Its proof is given  in Appendix~\ref{apdx:uniqueness}.

\begin{theorem}\label{thm:thm1}
Under Assumptions A1-A2, the BVP~\eqref{eqn:EL_phi} possesses a unique weak solution $\phi_j \in H_0^1(\Re^d;p)$, satisfying
\begin{equation}
\int |\nabla\phi_j(x)|^2 p(x,t) \ud x  \le \frac{1}{\lambda} \int |h_j(x) - \hat{h}_j|^2 p(x,t)
\ud x\label{eqn:bound1} .
\end{equation}
If in addition Assumptions A3-A4 hold, then $\phi_j \in H^2(\Re^d;p)$ with
\begin{equation}
\int  \left| (D^2 \phi_j) (\nabla \phi_j) \right|  p(x,t) \ud x  \le
C(\lambda;p) \int |\nabla h_j|^2 p(x,t) \ud x,
\label{eqn:bound2}
\end{equation}
where $\lambda$ is (spectral gap) constant (see Appendix~\ref{apdx:uniqueness}) and $C(\lambda;p)=\frac{1}{\lambda^{3/2}} \left( \frac{\lVert D^2(\log p)
    \rVert_{L^\infty}}{\lambda} + 1 \right)^{\! 1/2}$.
\qed
\end{theorem}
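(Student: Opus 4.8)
The plan is to treat the weak problem \eqref{eqn:EL_phi_weak} by the variational (Lax--Milgram) method, with the spectral gap of the Smoluchowski generator supplying coercivity, and then to extract the second-order estimate from an integrated Bochner identity. First I would record the Poincar\'e inequality forced by Assumption~A1. The condition $|\nabla\smopot|^2 - 2\Delta\smopot\to\infty$ is the classical confinement criterion guaranteeing that the symmetric generator $\clD$ has a spectral gap $\lambda>0$ on $L^2(\Re^d;p)$; via the ground-state (Witten) transform this is equivalent to $-\Delta$ plus a potential tending to $+\infty$, hence to a discrete spectrum with a gap above the constants. This yields, for every $p$-mean-zero $f\in H^1(\Re^d;p)$,
\[
\int |f|^2 p\,\ud x \le \frac{1}{\lambda}\int|\nabla f|^2 p\,\ud x .
\]
On $H_0^1(\Re^d;p)$ this makes $\langle\phi,\psi\rangle := \int \nabla\phi\cdot\nabla\psi\, p\,\ud x$ an inner product whose norm is equivalent to that of $H^1(\Re^d;p)$. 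Under Assumption~A2 (so $h_j\in L^2(\Re^d;p)$) the right-hand side of \eqref{eqn:EL_phi_weak} defines a bounded functional $\psi\mapsto \Expect[(h_j-\hat{h}_j)\psi]$, controlled via Cauchy--Schwarz and Poincar\'e by $\lambda^{-1/2}\|h_j-\hat{h}_j\|_{L^2(p)}\,\|\nabla\psi\|_{L^2(p)}$. The Riesz representation theorem then produces a unique $\phi_j\in H_0^1(\Re^d;p)$, and testing \eqref{eqn:EL_phi_weak} with $\psi=\phi_j$ and applying Poincar\'e to $\phi_j$ gives the energy bound \eqref{eqn:bound1} after dividing by $\|\nabla\phi_j\|_{L^2(p)}$.

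For the second-order bound I would invoke the integrated Bochner / $\Gamma_2$ identity for the weighted operator $\clD=\Delta-\nabla\smopot\cdot\nabla$, which on $\Re^d$ reads
\[
\int (\clD\phi_j)^2 p\,\ud x = \int \|D^2\phi_j\|^2 p\,\ud x + \int (D^2\smopot)(\nabla\phi_j,\nabla\phi_j)\, p\,\ud x ,
\]
where $\|D^2\phi_j\|$ is the Hilbert--Schmidt norm of the Hessian; this follows from $\int\Gamma_2(\phi_j)\,p\,\ud x = \int(\clD\phi_j)^2 p\,\ud x$ together with the flat-space formula $\Gamma_2(f)=\|D^2f\|^2+(D^2\smopot)(\nabla f,\nabla f)$. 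Since $\clD\phi_j=-(h_j-\hat{h}_j)$ by \eqref{e:fishphi} and $\smopot=-\log p$, Assumption~A3 bounds the curvature term in absolute value by $\|D^2(\log p)\|_{L^\infty}\int|\nabla\phi_j|^2 p\,\ud x$, so $\int\|D^2\phi_j\|^2 p\,\ud x$ is finite --- this is precisely the assertion $\phi_j\in H^2(\Re^d;p)$.

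To match the stated constant I would apply Poincar\'e to $h_j$ itself, legitimate since $\hat{h}_j$ is its $p$-mean and $\nabla h_j\in L^2(p)$ by A4, giving $\int|h_j-\hat{h}_j|^2 p\le \lambda^{-1}\int|\nabla h_j|^2 p$ and hence $\int|\nabla\phi_j|^2 p\le \lambda^{-2}\int|\nabla h_j|^2 p$ through \eqref{eqn:bound1}. The Bochner identity then gives $\int\|D^2\phi_j\|^2 p \le \lambda^{-1}\big(1+\lambda^{-1}\|D^2(\log p)\|_{L^\infty}\big)\int|\nabla h_j|^2 p$. Finally the pointwise estimate $|(D^2\phi_j)(\nabla\phi_j)|\le \|D^2\phi_j\|\,|\nabla\phi_j|$ followed by Cauchy--Schwarz in $L^2(p)$ yields
\[
\int \big|(D^2\phi_j)(\nabla\phi_j)\big| p\,\ud x \le \Big(\int\|D^2\phi_j\|^2 p\,\ud x\Big)^{1/2}\Big(\int|\nabla\phi_j|^2 p\,\ud x\Big)^{1/2} ,
\]
and substituting the two estimates reproduces $C(\lambda;p)=\lambda^{-3/2}\big(\lambda^{-1}\|D^2(\log p)\|_{L^\infty}+1\big)^{1/2}$, i.e.\ \eqref{eqn:bound2}.

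The main obstacle is the rigorous justification of the second part. The integrated Bochner identity rests on two integrations by parts against $p\,\ud x$ and on commuting $\clD$ with $\nabla$; these are transparent for smooth functions but must be transferred to the weak solution, whose $H^2$ regularity is itself a conclusion of the argument rather than an hypothesis. I would therefore carry out the identity on a Galerkin or mollified approximation (or on $C_c^\infty$ test functions and pass to the limit), using Assumption~A3 to keep the curvature term uniformly controlled and to force the boundary terms at infinity to vanish in $L^2(p)$; the resulting a priori bound both certifies $\phi_j\in H^2(\Re^d;p)$ and survives the limit. Establishing the spectral-gap constant $\lambda>0$ from A1 is the remaining quantitative input, and this is classical and may be cited.
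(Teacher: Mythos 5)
Your proposal is correct and follows essentially the same route as the paper: the existence--uniqueness and the bound \eqref{eqn:bound1} are obtained exactly as in Appendix~\ref{apdx:uniqueness} (Poincar\'e inequality from A1, Riesz representation on $H_0^1(\Re^d;p)$ with the Dirichlet inner product, then testing with $\psi=\phi_j$), while your integrated Bochner/$\Gamma_2$ identity is precisely the content of the paper's Lemma~\ref{lem:secondderiv}, which is derived by differentiating the pointwise equation in $x_k$ and testing against $\beta(sx)^2\,\partial\phi_j/\partial x_k$. The justification issue you flag for the weak solution is exactly the gap the paper closes via the $H^3_{loc}$ elliptic-regularity step plus the bump-function cutoff, and your bookkeeping reproduces the constant $C(\lambda;p)$ exactly.
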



The apriori bounds~\eqref{eqn:bound1}-\eqref{eqn:bound2} are used to
show that the control input for the feedback particle filter is
admissible. The proof is omitted on account of space.
\begin{corollary}\label{cor:cor1}
Suppose $\phi_j$ is the weak solution of BVP~\eqref{eqn:EL_phi} as
described in Theorem~\ref{thm:thm1}.  The gain function $\v$ is
obtained using~\eqref{eqn:gradient_gain_fn} and $u$ is given
by~\eqref{eqn:u_intermsof_v*}.  Then
\begin{align*}
\Expect[|\v|^2] & \le  \frac{1}{\lambda} \sum_{j=1}^m \int |h_j(x)|^2
p(x,t) \ud x,\\
\Expect[ |u| ] & \le \left( \frac{1}{\lambda} +
  C(\lambda;p) \right)  \sum_{j=1}^m \int \left( |h_j(x)|^2 + |\nabla h_j|^2 \right)
p(x,t) \ud x,
\end{align*}
where $C(\lambda;p)$ is given in Theorem~\ref{thm:thm1}.  That is, the resulting control input in~\eqref{eqn:gradient_gain_fn} is admissible.
\spm{$|R|^2$ changed to $m^2$}
\qed
\end{corollary}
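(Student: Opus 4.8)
The plan is to read both inequalities straight off the a priori estimates \eqref{eqn:bound1}--\eqref{eqn:bound2} of Theorem~\ref{thm:thm1}, and then to package the two finiteness statements together with a measurability argument into admissibility. For the gain, note that $\v$ is the $d\times m$ matrix whose $j$-th column is $\nabla\phi_j$, so $|\v|^2=\sum_{l,j}\v_{lj}^2=\sum_{j=1}^m|\nabla\phi_j|^2$ and hence $\Expect[|\v|^2]=\sum_{j=1}^m\int|\nabla\phi_j|^2\,p\,\ud x$. Applying \eqref{eqn:bound1} term by term bounds this by $\tfrac1\lambda\sum_j\int|h_j-\hat h_j|^2\,p\,\ud x$, and since $\hat h_j=\int h_j\,p\,\ud x$ is the conditional mean, $\int|h_j-\hat h_j|^2\,p\,\ud x=\int h_j^2\,p\,\ud x-\hat h_j^2\le\int h_j^2\,p\,\ud x$. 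This is exactly the first inequality, and Assumption~A2 makes the right-hand side finite.

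For the drift term I would split $u=-\tfrac12\v(h+\hat h)+\w$ using \eqref{eqn:u_intermsof_v*}, so that $\Expect[|u|]\le\tfrac12\Expect[|\v(h+\hat h)|]+\Expect[|\w|]$. The first piece is handled by Cauchy--Schwarz in $L^2(\Re^d;p)$: writing $\v(h+\hat h)=\sum_j(h_j+\hat h_j)\nabla\phi_j$, each summand integrates against $p$ to at most $\big(\int|h_j+\hat h_j|^2\,p\,\ud x\big)^{1/2}\big(\int|\nabla\phi_j|^2\,p\,\ud x\big)^{1/2}$; combining \eqref{eqn:bound1} with the elementary estimate $\int|h_j+\hat h_j|^2\,p\,\ud x=\int h_j^2\,p\,\ud x+3\hat h_j^2\le 4\int h_j^2\,p\,\ud x$ (using $\hat h_j^2\le\int h_j^2\,p\,\ud x$) produces the $\tfrac1\lambda$-term. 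The key step is the $\w$-piece: substituting $\v_{ls}=\partial\phi_s/\partial x_l$ into \eqref{eqn:wong_term_intro} and invoking symmetry of the Hessian identifies $\w=\tfrac12\sum_{s=1}^m(D^2\phi_s)(\nabla\phi_s)$, whence $\Expect[|\w|]\le\tfrac12\sum_s\int\big|(D^2\phi_s)(\nabla\phi_s)\big|\,p\,\ud x$ is precisely the quantity controlled by \eqref{eqn:bound2}. This gives the $C(\lambda;p)$-term with factor $\int|\nabla h_s|^2\,p\,\ud x$, and adding the two contributions yields the stated bound up to fixed dimension-dependent constants that are absorbed into the displayed ones.

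Admissibility then has two halves. Finiteness of $\Expect[|\v|^2]$ and $\Expect[|u|]$ is immediate from the above once Assumptions~A2 and~A4 guarantee the right-hand integrals are finite. For the measurability requirement, I would observe that for each fixed $t$ the data $p(\cdot,t)$, $h$, and $\hat h$ entering the BVP~\eqref{eqn:EL_phi} are $\clZ_t$-measurable, and that Theorem~\ref{thm:thm1} supplies a \emph{unique} weak solution $\phi_j$; the measurable dependence of this unique solution on the $\clZ_t$-measurable data transfers to $\v=\nabla\phi$ and, through \eqref{eqn:u_intermsof_v*}, to $u$.

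The main obstacle I expect is the $\w$-term rather than the Cauchy--Schwarz bookkeeping: one must verify that the Wong--Zakai correction really is the vector of Hessian--gradient products $(D^2\phi_s)(\nabla\phi_s)$ matched by \eqref{eqn:bound2}, which presupposes $\phi_s\in H^2(\Re^d;p)$ (and hence Assumptions~A3--A4) and the interchange of mixed partials. The remaining inequalities are routine, and the exact numerical constants are inessential to the admissibility conclusion.
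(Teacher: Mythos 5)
Your argument is correct and follows exactly the route the paper intends: the paper omits the proof of Corollary~\ref{cor:cor1} ``on account of space,'' saying only that the a priori bounds \eqref{eqn:bound1}--\eqref{eqn:bound2} of Theorem~\ref{thm:thm1} are used, and your decomposition ($|\v|^2=\sum_j|\nabla\phi_j|^2$ for the gain, and $u=-\tfrac12\v(h+\hat h)+\w$ with $\w=\tfrac12\sum_s(D^2\phi_s)(\nabla\phi_s)$ matched against \eqref{eqn:bound2} for the drift) is the natural way to deploy those bounds. The only caveat is that your Cauchy--Schwarz step for the $\v(h+\hat h)$ term produces a factor $\lambda^{-1/2}$ rather than the stated $\lambda^{-1}$ (and the convention $\Expect[|u|]=\Expect[\sum_l|u_l|]$ costs a further dimension-dependent factor), so the displayed constant in the second inequality is not reproduced exactly --- but the first inequality you obtain with the exact constant, and, as you note, only finiteness is needed for the admissibility conclusion.
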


\spm{I don't think \Sec{sec:LG} is very useful}
\subsection{Linear Gaussian case}
\label{sec:LG}
Consider the linear system,
\begin{subequations}
\begin{align}
\ud X_t &= A\; X_t \ud t + \ud B_t \label{eqn:multi1}\\
\ud Z_t &= H\; X_t \ud t + \ud W_t \label{eqn:multi2}
\end{align}
\end{subequations}
where $A$ is an $d\times d$ matrix, and $H$ is an $m \times d$
matrix. The initial distribution $p^\ast (x,0)$ is Gaussian with mean vector
$\mu_0$ and covariance matrix $\Sigma_0$.

The following proposition shows that the Kalman gain is a
gradient-form solution of the multivariable BVP~\eqref{e:bvp_divergence_multi}:

\begin{proposition}
\label{prop_multi} Consider the d-dimensional linear
system~\eqref{eqn:multi1}-\eqref{eqn:multi2}.  Suppose $p(x,t)$ is
assumed to be Gaussian: $p(x,t) = \frac{1}{(2\pi)^{\frac{d}{2}}
  |\Sigma_t|^\frac{1}{2}} \exp \left(-\frac{1}{2}(x-\mu_t)^T
  \Sigma_t^{-1} (x-\mu_t)\right)$, where $x = (x_1,x_2,...,x_d)^T$,
$\mu_t=(\mu_{1t},\mu_{2t},\hdots,\mu_{dt})^T$ is the mean,
$\Sigma_t$ is the covariance matrix, and $|\Sigma_t|>0$ denotes the
determinant.  A solution of the BVP~\eqref{eqn:EL_phi} is given by,
\begin{equation}
\phi_j(x,t) = \sum_{k=1}^d [\Sigma_t H^T]_{kj} (x_k - \mu_{kt}).
\label{eqn:linsol_v}
\end{equation}
Using~\eqref{eqn:gradient_gain_fn}, $\v(x,t)=\Sigma_t H^T
$ (the Kalman gain) is the gradient form solution
of~\eqref{e:bvp_divergence_multi}.
\qed
\end{proposition}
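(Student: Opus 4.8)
The plan is to verify the candidate directly, exploiting that in the linear--Gaussian setting every object entering the BVP~\eqref{eqn:EL_phi} is explicit. First I would specialize the data. Since $h(x)=Hx$ we have $h_j(x)=\sum_{l}H_{jl}x_l$ and $\hat h_j=[H\mu_t]_j$, so that
\[
h_j(x)-\hat h_j=\sum_{l}H_{jl}(x_l-\mu_{lt})=(H^T)_{\cdot j}\cdot(x-\mu_t),
\]
where $(H^T)_{\cdot j}$ denotes the $j$-th column of $H^T$ (equivalently, the $j$-th row of $H$). From the Gaussian form of $p$, the potential is $\smopot=-\log p=\tfrac12(x-\mu_t)^T\Sigma_t^{-1}(x-\mu_t)+\text{const}$, hence $\nabla\smopot=\Sigma_t^{-1}(x-\mu_t)$.

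Rather than work with the divergence form, I would use the equivalent Poisson equation $\clD\phi_j=-(h_j-\hat h_j)$ established above, with $\clD f=-\nabla\smopot\cdot\nabla f+\Delta f$. The candidate $\phi_j(x,t)=\sum_k[\Sigma_t H^T]_{kj}(x_k-\mu_{kt})$ is affine in $x$, so its gradient is the constant vector $\nabla\phi_j=(\Sigma_t H^T)_{\cdot j}$ and its Laplacian vanishes. The computation therefore collapses to the first-order term,
\[
\clD\phi_j=-\nabla\smopot\cdot\nabla\phi_j=-(x-\mu_t)^T\Sigma_t^{-1}(\Sigma_t H^T)_{\cdot j}.
\]
The key cancellation is $\Sigma_t^{-1}\Sigma_t=I$, reducing the right side to $-(x-\mu_t)^T(H^T)_{\cdot j}=-(h_j-\hat h_j)$, exactly matching the Poisson equation. (Equivalently, substituting $\nabla\phi_j=\text{const}$ into the divergence form and using $\nabla p=-p\,\nabla\smopot$ produces the same cancellation.)

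It then remains to check the normalization and read off the gain, both immediate. Since $\phi_j$ is a linear combination of the centered coordinates $x_k-\mu_{kt}$ and $\E[x_k-\mu_{kt}]=0$ under the Gaussian of mean $\mu_t$, we get $\int\phi_j\,p\,\ud x=\E[\phi_j]=0$, so the constraint in~\eqref{eqn:EL_phi} holds. Finally, differentiating the candidate gives $\v_{lj}=\partial\phi_j/\partial x_l=[\Sigma_t H^T]_{lj}$ by~\eqref{eqn:gradient_gain_fn}, i.e.\ $\v=\Sigma_t H^T$, the Kalman gain.

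There is no substantive obstacle here: the result is a direct verification, and the only thing demanding care is the index bookkeeping, namely keeping straight that the $j$-th column of $\Sigma_t H^T$ is $\Sigma_t$ times the $j$-th row of $H$, so that after the $\Sigma_t^{-1}\Sigma_t$ cancellation one recovers precisely $\sum_l H_{jl}(x_l-\mu_{lt})$. I would also note that the Gaussian $\smopot$ satisfies Assumptions~A1--A2, so Theorem~\ref{thm:thm1} guarantees uniqueness of the gradient-form solution in $H_0^1(\Re^d;p)$; hence this explicit $\phi_j$ is in fact \emph{the} solution, not merely \emph{a} solution.
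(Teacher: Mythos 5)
Your verification is correct and matches the paper's (unelaborated) proof, which simply states that the formula is checked by direct substitution of the Gaussian density into the BVP~\eqref{eqn:EL_phi}; your use of the equivalent Poisson-equation form $\clD\phi_j=-(h_j-\hat h_j)$ is the same computation after dividing by $p$, as you note. The closing observation that Assumptions A1--A2 hold for the Gaussian, so Theorem~\ref{thm:thm1} upgrades ``a solution'' to ``the unique solution in $H_0^1(\Re^d;p)$,'' is a worthwhile addition beyond what the paper records.
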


The formula~\eqref{eqn:linsol_v} is verified by direct
substitution in the BVP~\eqref{eqn:EL_phi} where the distribution $p$ is
multivariable Gaussian.

The gain function yields the following form for the particle filter in this linear Gaussian model:
\begin{equation}
\begin{aligned}
\ud X^i_t= A \; X^i_t \ud t +
 \ud B^i_t + \Sigma_t H^T
 \left( \ud Z_t - H \frac{X^i_t + \mu_t}{2} \ud t \right).
\end{aligned}
\label{eqn:particle_filter_lin}
\end{equation}

Now we show that $p=p^*$ in this case. That is, the conditional
distributions of $\bfmX$ and $\bfmX^i$ coincide, and are
defined by the well-known dynamic equations that characterize
the mean and the variance of the continuous-time Kalman filter.

\begin{theorem}
\label{thm_lin} Consider the linear Gaussian filtering problem
defined by the state-observation equations~\eqref{eqn:multi1}-\eqref{eqn:multi2}.  In this case the
posterior distributions of $X_t$ and $X^i_t$  are Gaussian,
whose conditional mean and covariance are given by the
respective SDE and the ODE,
\begin{align*}
 \ud \mu_t  &= A \mu_t \ud t + \Sigma_t H^T
  \Bigl(\ud Z_t- H\mu_t \ud t \Bigr)
\\[.1cm]
\ddt \Sigma_t
&= A \Sigma_t + \Sigma_t A^T + I - \Sigma_t H^T
H \Sigma_t
\end{align*}
\qed
\end{theorem}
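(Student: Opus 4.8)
The plan is to verify the theorem by a self-consistent Gaussian ansatz for the particle density $p$, computing the evolution of its conditional mean and covariance directly from the particle filter SDE~\eqref{eqn:particle_filter_lin}, and then matching these to the classical Kalman--Bucy equations that govern the true posterior $p^*$. Concretely, I posit that $p(\varble,t)$ is Gaussian with mean $\mu_t$ and covariance $\Sigma_t$; Proposition~\ref{prop_multi} then certifies that the gain in~\eqref{eqn:particle_filter_lin} is exactly $\v=\Sigma_t H^T$, so the particle dynamics are affine in $X^i_t$ with $\clZ_t$-measurable (indeed deterministic) coefficients. The whole argument reduces to two moment computations plus a closure/consistency check.

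For the mean, I would set $\mu_t:=\Expect[X^i_t\mid\clZ_t]$ and take the $\clZ_t$-conditional expectation of~\eqref{eqn:particle_filter_lin}. Since $\{B^i_t\}$ is independent of $\clZ_t$ its increments contribute nothing, while in the gain term $X^i_t$ averages to $\mu_t$ and the observation-driven part survives; this yields directly
\[
\ud\mu_t = A\mu_t\,\ud t + \Sigma_t H^T\bigl(\ud Z_t - H\mu_t\,\ud t\bigr),
\]
the asserted mean SDE. For the covariance I would introduce the error $e^i_t:=X^i_t-\mu_t$. Subtracting the $\mu_t$ equation from~\eqref{eqn:particle_filter_lin} cancels the $\ud Z_t$ forcing and produces the linear SDE
\[
\ud e^i_t = \bigl(A-\tfrac12\Sigma_t H^T H\bigr)e^i_t\,\ud t + \ud B^i_t .
\]
Applying It\^o's rule to $e^i_t(e^i_t)^T$, using $\ud B^i_t(\ud B^i_t)^T = I\,\ud t$, and taking conditional expectation shows that $\Sigma_t=\Expect[e^i_t(e^i_t)^T\mid\clZ_t]$ evolves by
\[
\ddt\Sigma_t = A\Sigma_t + \Sigma_t A^T + I - \Sigma_t H^T H\Sigma_t,
\]
exactly the Riccati equation in the statement.

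Crucially, this last equation is a deterministic ODE with no $\ud Z_t$ forcing, so $\Sigma_t$ is deterministic and the gain is a genuine function of $t$ alone; moreover $e^i_t$ solves a linear SDE with deterministic coefficients driven by a Wiener process independent of $\clZ_t$ and started from a Gaussian $e^i_0$, so $X^i_t\mid\clZ_t$ is Gaussian. This closes the ansatz: the Gaussian with moments $(\mu_t,\Sigma_t)$ is consistent, and by uniqueness of the linear SDE/Riccati pair it is the unique such solution. Finally, the true posterior $p^*$ for~\eqref{eqn:multi1}--\eqref{eqn:multi2} is the classical Kalman--Bucy filter, Gaussian with mean and covariance obeying the same SDE and Riccati ODE from the same initial data $(\mu_0,\Sigma_0)$; since both $p$ and $p^*$ are Gaussian with identical $(\mu_t,\Sigma_t)$, we conclude $p(\varble,t)=p^*(\varble,t)$. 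Alternatively one may short-circuit this step by invoking Theorem~\ref{thm:kushner}, whose hypotheses hold because Proposition~\ref{prop_multi} exhibits $\v$ as the gradient-form solution and the initializations agree.

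The main obstacle I anticipate is not the algebra but the closure argument: justifying that the $\Sigma_t$ feeding the gain genuinely coincides with the conditional covariance, and rigorously interchanging conditional expectation with stochastic integration (via independence of $B^i$ from $\clZ_t$ and the optional projection theorem). Once Gaussianity of the conditional law is secured, the two moment computations above are routine.
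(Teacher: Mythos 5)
Your proof is correct, but it takes a genuinely different route from the paper's. The paper verifies the theorem at the level of the density: it substitutes the Gaussian ansatz $p(x,t)= (2\pi)^{-d/2}|\Sigma_t|^{-1/2}\exp[-\frac{1}{2}(x-\mu_t)^T\Sigma_t^{-1}(x-\mu_t)]$ directly into the forward equation~\eqref{eqn:mod_FPK} and matches coefficients to extract the mean SDE and the Riccati ODE, noting also that the claim is a special case of the general consistency result, Theorem~\ref{thm:kushner}. You instead work at the level of the particle SDE~\eqref{eqn:particle_filter_lin}: conditional expectation gives the mean equation, the error process $e^i_t=X^i_t-\mu_t$ satisfies a linear SDE in which the $\ud Z_t$ forcing cancels (and, since the gain is state-independent, the Wong--Zakai term $\w$ vanishes, so~\eqref{eqn:particle_filter_lin} is the exact It\^o form), and It\^o's rule on $e^i_t(e^i_t)^T$ yields the Riccati equation; your algebra checks out, including the factor-of-two bookkeeping by which the two $-\tfrac12\Sigma_tH^TH\Sigma_t$ contributions combine into $-\Sigma_tH^TH\Sigma_t$. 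Your route buys something the paper's terse verification does not make explicit: because $\Sigma_t$ solves a deterministic ODE, $e^i_t$ is a Gaussian process \emph{independent of} $\clZ_t$, which simultaneously proves conditional Gaussianity, closes the self-consistency of the ansatz ($\Expect[X^i_t\mid\clZ_t]=\mu_t$ and the conditional covariance really is the $\Sigma_t$ feeding the gain), and legitimizes the interchange of conditional expectation with the stochastic integrals. The paper's PDE substitution, by contrast, is shorter, parallels the machinery used for the general nonlinear consistency proof, and does not require the error-decoupling trick, but it implicitly leans on uniqueness of solutions to the forward equation. Both arguments invoke the classical Kalman--Bucy result for $p^*$; that is standard and acceptable here.
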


The result is verified by substituting $p(x,t)= (2\pi)^{-\frac{d}{2}}|\Sigma_t|^{-\frac{1}{2}}\exp\left[-\frac{1}{2}(x-\mu_t)^T\Sigma_t^{-1}(x-\mu_t)\right]$ in the forward
equation~\eqref{eqn:mod_FPK}.
The details are omitted on account of space, and because the result is a special case of Theorem~\ref{thm:kushner}.


In practice $\{\mu_t, \Sigma_t\}$ are approximated as sample means and sample covariances from the ensemble
$\{X^i_t\}_{i=1}^N$:
\spm{Someone revise!  Shorten, and and also remember that Sigma is a matrix}
\begin{align*}
\mu_t & \approx \mu_t^{(N)} := \frac{1}{N} \sum_{i=1}^N X^i_t,\\
\Sigma_t & \approx \Sigma_t^{(N)} := \frac{1}{N-1} \sum_{i=1}^N (X^i_t - \mu_t^{(N)})^2.
\end{align*}
The resulting equation
\eqref{eqn:particle_filter_lin} for the $i^{\text{th}}$
particle is given by
\begin{equation}
\begin{aligned}
\ud X^i_t= A \; X^i_t \ud t +
 \ud B^i_t + \Sigma_t^{(N)}H^T
 \left( \ud Z_t - H \frac{X^i_t + \mu_t^{(N)}}{2} \ud t \right).
\end{aligned}
\nonumber
\end{equation}
As $N\rightarrow \infty$, the empirical distribution of the particle
system approximates the posterior distribution $p^{*}(x,t)$ (by Theorem~\ref{thm_lin}).



\section{Finite-Element Algorithm}
\label{sec:algo}

In this section, a Galerkin finite-element algorithm is described to
construct  an approximate solution of~\eqref{eqn:EL_phi_weak}.  Since
there are $m$ uncoupled BVPs, without loss of generality, we assume
scalar-valued observation in this section, with $m=1$, so that
$\v=\nabla\phi$.  The time $t$ is fixed.  The explicit
dependence on time is suppressed for notational ease (That is,
$p(x,t)$ is denoted as $p(x)$, $\phi(x,t)$ as $\phi(x)$ etc.).

\spm{repetitive:
The method proposed here can be used to obtain approximate solution
of the BVP~\eqref{eqn:EL_phi_weak} for $j=1,2,\hdots,m$ each.  The gain
function for the general $m$-dimensional observation case is then
obtained using~\eqref{eqn:gradient_gain_fn}.
}

\def\prechi{\zeta}

\subsection{Galerkin Approximation}

Using~\eqref{eqn:EL_phi_expect}, the gain function $\v=\nabla\phi$ is a weak solution if
\begin{equation}
\Expect[ \v \cdot \nabla \psi ] =
\Expect [(h-\hat{h}) \psi ],\quad\forall\,\,\psi\in H^1(\Re^d;p).\label{eqn:BVP_phi_weak_expect}
\end{equation}
The gain function is approximated as,
\begin{equation}
\v = \sum_{l=1}^L \kappa_l \chi_l(x),\nonumber
\end{equation}
where $\{\chi_l(x) \}_{l=1}^L$ are {\em basis functions}.  For each $l
=1,\hdots,L$, $\chi_l(x)$ is a gradient function; That is, $\chi_l(x)
= \nabla \prechi_l(x)$ for some function $\prechi_l(x)\in H_0^1(\Re^d;p)$.
\spm{someone had used $\phi$ for $\prechi$.  No! no!!}
\pgm{All symbols look the same to me.  ----ing Greeks.}

The {\em test functions} are denoted as $\{\psi_k(x)\}_{k=1}^L$ and $S:=\text{span}\{\psi_1(x),\psi_2(x),\hdots,\psi_L(x)\}\subset H^1(\Re^d;p)$.
\spm{I do not understand:  What is $H^1$?
Shouldn't $S$ be a function space?}
\pgm{corrected}

The finite-dimensional approximation of the
BVP~\eqref{eqn:BVP_phi_weak_expect} is to choose constants
$\{\kappa_l\}_{l=1}^L$ such that
\begin{equation}
\sum_{l=1}^L \kappa_l \Expect[ \chi_l \cdot \nabla \psi ] =
\Expect [  (h-\hat{h}) \psi ],\quad\forall\,\,\psi\in
S.
\label{eqn:BVP_phi_weak_expect_fd}
\end{equation}

Denoting $[A]_{kl} = \Expect[ \chi_l \cdot
\nabla \psi_k ]$, $b_k = \Expect [  (h-\hat{h}) \psi_k ]$,
$\kappa=(\kappa_1,\kappa_2,\hdots,\kappa_L)^T$, the
finite-dimensional approximation~\eqref{eqn:BVP_phi_weak_expect_fd}
is expressed as a linear matrix equation:
\[
A\kappa = b.
\]
The matrix $A$ and vector $b$ are easily approximated by using only the particles:
\begin{align}
[A]_{kl} & = \Expect[ \chi_l \cdot
\nabla \psi_k ] \approx \frac{1}{N} \sum_{i=1}^N \chi_l(X^i_t) \cdot
\nabla \psi_k (X^i_t),\label{formula:A_ml_sample}\\
b_k & = \Expect [  (h-\hat{h}) \psi_k ]  \approx \frac{1}{N}
\sum_{i=1}^N (h(X^i_t) - \hat{h}) \psi_k(X^i_t), \label{formula:b_m_sample}
\end{align}
where recall $\hat{h} \approx \frac{1}{N}
\sum_{i=1}^N h(X^i_t)$.



\spm{the examples confuse me.  I stopped editing this section, and jumped to the appendix.}

\subsection{Example 1: Constant Gain Approximation}

Suppose $\chi_l=e_l$, the canonical coordinate vector with value $1$
for the $l^{\text{th}}$ coordinate and zero otherwise.  The test
functions are the coordinate functions $\psi_k(x) = x_k$ for
$k=1,2,\hdots,d$.
Denoting $\psi(x) = (\psi_1,\psi_2,\hdots,\psi_d)^T = x$,
\begin{align}
\kappa = \Expect[\v] = \Expect [  (h-\hat{h}) \psi ]  &= \int (h(x)
-\hat{h}) \psi (x) p(x) \ud x \nonumber\\
&\approx \frac{1}{N} \sum_{i=1}^N (h(X^i_t) - \hat{h}) X^i_t.\label{eqn:const_approx_gain}
\end{align}
This formula yields the {\em constant-gain approximation} of the gain function.

\subsection{Example 2: Single-state Case}

Consider a scalar example, where the density is a sum of Gaussian,
\begin{equation}
p(x) \approx \sum_{j=1}^3 \lambda^j q^j(x),
\nonumber
\end{equation}
where $q^j(x) = q(x;\mu^j,\Sigma^j)= \frac{1}{\sqrt{2\pi \Sigma^j}}
\exp(-\frac{(x-\mu^j)^2}{2\Sigma^j})$,  $\lambda^j >0$, $\sum
\lambda^j =1$.  The parameter values for $\lambda^j,\mu^j, \Sigma^j$
are tabulated in Table~I.

\begin{figure}
\centering
\includegraphics[width=\hsize]{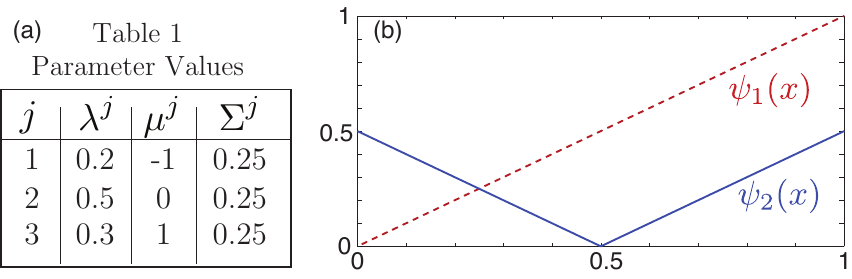}
\caption{(a) Parameter values and (b)
  $(\psi_1,\psi_2)$ in the Example.}
\label{fig:base_fnc}
\end{figure}

In the scalar case, a direct numerical solution (DNS) of the gain function is obtained by numerically approximating the integral
\begin{equation*}
\v(x) = -\frac{1}{p(x)} \int_{-\infty}^x (h(y)-\hat{h}) p(y) \ud y.
\end{equation*}
The DNS solution is used to provide comparisons with the approximate Galerkin
solutions.

The Galerkin approximation of the gain function is constructed on an
interval domain $D\subset\Re$.  The domain is a union of finitely
many non-intersecting intervals $D_l = [a_{l-1},a_l)$, where
$a_0<a_1<\hdots<a_L$.

Define for $l=1,2,\hdots,L$ and $k=1,2,\hdots,L$:
\begin{align*}
\text{Basis functions:} & \quad  \quad \chi_l(x) = 1_{D_l}(x),\\
\text{Test functions:} & \quad  \quad \psi_k(x)  = |x-a_{k}|.
\end{align*}
Figure~\ref{fig:base_fnc} depicts the test functions $\{\psi_1(x),\psi_2(x)\}$ for
$D=[0,1]$ and $a_0=0$, $a_1 = \frac{1}{2}$ and $a_2=0$.
The basis functions are the indicator functions on $[0,\frac{1}{2})$ and
$[\frac{1}{2},1)$.


Figure~\ref{fig:fig_h_x2} depicts a comparison of the DNS solution
and the Galerkin solution for $h(x)=x^2$, $D=[-2,2]$ and $L=1,5,15$.
For a given $L$, the basis and test functions are constructed for a
uniform partition of the domain (That is, $a_{l}= -2 + \frac{l}{L}4$).
The Galerkin solution is obtained using $N=1000$ particles that
are sampled from the distribution $p$.  The particles are used to
compute matrix $A$ and vector $b$, using
formulae~\eqref{formula:A_ml_sample} and \eqref{formula:b_m_sample},
respectively.  Since the analytical form of $p$ is known, these
matrices can also be assembled by using the integrals:
\begin{align}
[A]_{kl} & = \int  \chi_l (x) \cdot
\nabla \psi_k(x) p(x) \ud x,\label{formula:A_ml_integral}\\
b_k & = \int
(h(x)-\hat{h}) \psi_k(x) p(x) \ud x. \label{formula:b_m_integral}
\end{align}
The figure also depicts the Galerkin solution based on the integral
evaluation of the matrix $A$ and vector $b$.


For $L=15$, the matrix $A$ was found to be singular for the
particle-based implementation.  This is because there are no particles
in $D_{15}$.  In this case, the Galerkin solution is obtained using
only the integral
formulae~\eqref{formula:A_ml_integral}-\eqref{formula:b_m_integral}.
These formulae are exact while the particle-based
formulae~\eqref{formula:A_ml_sample} and \eqref{formula:b_m_sample}
are approximations.  In the other two cases ($L=1$ and $L=5$),
the particle-based solution provides a good approximation.


\begin{figure*}
\centering \hspace{-0.2cm}
\includegraphics[scale=1]{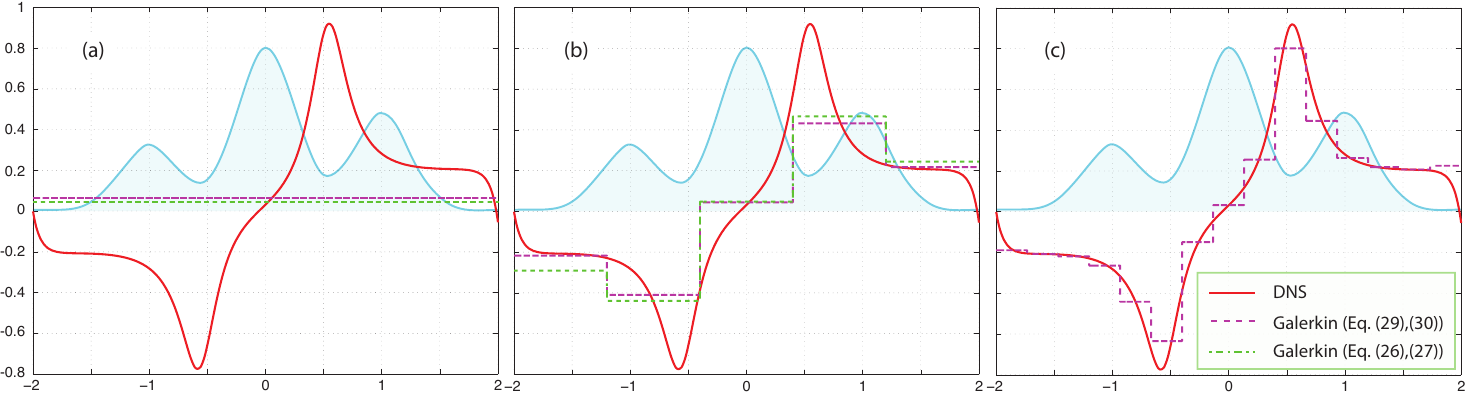}
\vspace{-0.1in}
 \caption{Comparison of the DNS and the Galerkin approximations of the gain
   function for $h(x)=x^2$ and: (a) $L=1$, (b)
   $L=5$ and (c) $L=15$.  The density is depicted as the shaded curve
 in the background.} \vspace{0.25cm} \label{fig:fig_h_x2}
\vspace{-0.2in}
\end{figure*}



\section{Numerics}
\label{sec:numerics}


Consider a target tracking problem with two bearing-only
sensors~\cite{Bar-Shalom:2002:EAT:863669}.  A single target moves in a
two-dimensional (2d) plane  according to the standard white-noise acceleration model:
\begin{equation}
\ud X_t = A X_t \ud t + \Gamma \ud B_t,\nonumber
\end{equation}
where $X :=(X_1,V_1,X_2,V_2)^T\in \Re^4$, $(X_1,X_2)$ denotes the
position and $(V_1,V_2)$ denotes the velocity.  The matrices, 
\begin{equation}
A = \begin{bmatrix}
	0 & 1 & 0 & 0\\
	0 & 0 & 0 & 0\\
	0 & 0 & 0 & 1\\
	0 & 0 & 0 & 0
	\end{bmatrix},\qquad \Gamma = \sigma_B \begin{bmatrix}
										0 & 0\\
										1 & 0\\
										0 & 0\\
										0 & 1
										\end{bmatrix},\nonumber
\end{equation}   
and $B_t$ is a standard 2d Wiener process.

The observation model is given by,     
\begin{equation}
\ud Z_t = h(X_t) \ud t + \sigma_W \ud W_t,\nonumber
\end{equation}
where $W_t$ is a standard 2d Wiener process, $h = (h_1,h_2)^T$ and
\begin{equation}
h_j(x_1,v_1,x_2,v_2) = \arctan \left(\frac{x_2 - x_2^{(\text{sen}\; j)}}{x_1 - x_1^{(\text{sen}\; j)}}\right),\quad j = 1,2,\nonumber
\end{equation}
where $(x_1^{(\text{sen}\;
  j)},x_2^{(\text{sen}\; j)})$ denote the position of sensor $j$. 

Figure~\ref{fig:target_track_mfpf} depicts a sample path obtained for
a typical numerical experiment.  The sensor and target locations are
depicted together with an estimate (conditional mean) that is
approximated using a feedback particle filter.  The background depicts
the ensemble of observations that were made over the simulation run.  Each point in
the ensemble is obtained by using the process of triangulation based
on two (noisy) angle measurements.  The simulation parameters are: The
initial position of the target is depicted, the initial velocity was
chosen as $(0.2,-5)$ and $\sigma_B = 0.1$; The two sensor positions
are depicted and $\sigma_W = 0.017$; The particle filter comprised of
$N=200$ particles whose initial position was chosen from a Gaussian
distribution whose mean is depicted.  The gain function was obtained
using the constant gain approximation
in~\eqref{eqn:const_approx_gain}.  The simulation results show that the
filter can adequately track the target.

\begin{figure}
\centering
\includegraphics[scale=1]{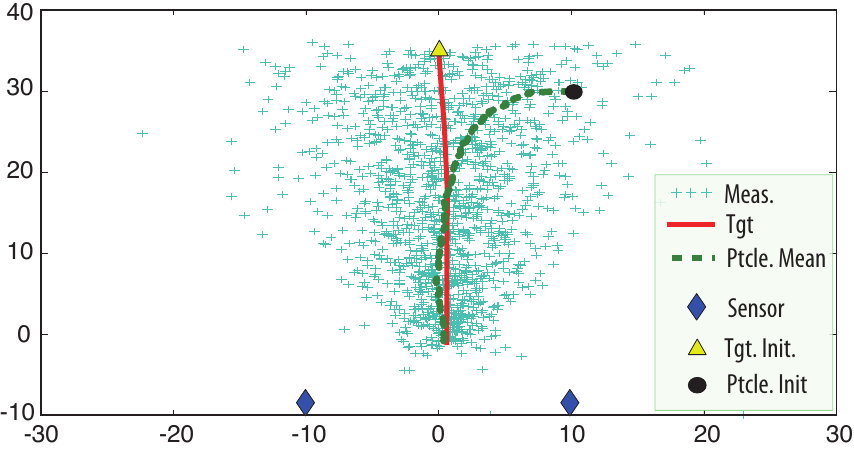}
\vspace{-0.0in}
\caption{Simulation results: Comparison of the true target trajectory
  with the estimate obtained using FPF.}
\vspace{-.25cm}
\label{fig:target_track_mfpf}
\end{figure}



\appendix
\section{Appendix}

\subsection{Proof of Theorem~\ref{thm:kushner}}
\label{apdx:pf_Kushner}
It is only necessary to show that with the choice of $\{u,\v\}$ given
by~\eqref{e:bvp_divergence_multi}-\eqref{eqn:u_intermsof_v*},  we have
$\ud p(x,t) = \ud p^*(x,t)$, for all $x$ and $t$,  in the sense that
they are defined by identical stochastic differential equations.   Recall $\ud p^*$ is
defined according to the K-S equation~\eqref{eqn:Kushner_eqn},
and $\ud p$ according to the forward equation~\eqref{eqn:mod_FPK}.

Recall that the gain function $\v$ is a solution of the following BVP:
\begin{equation}
\nabla \cdot (p\v) = -p(h-\hat{h})^T \, .
\label{eqn:bvp_multi_Z_matrix}
\end{equation}
On multiplying both sides of~\eqref{eqn:u_intermsof_v*} by $-p$, we obtain
\begin{equation}
\begin{aligned}
-up & = \frac{1}{2} p (h-\hat{h}) \v - \w p + p \v \hat{h} \\
      & = - \frac{1}{2} \v
      [\nabla \cdot (p\v)]^T - \w p + p\v \hat{h}
\end{aligned}
 \label{eqn:interm_up}
\end{equation}
where~\eqref{eqn:bvp_multi_Z_matrix} is used to obtain the second
equality.  Denoting $E:= \frac{1}{2} \v [\nabla \cdot (p\v)]^T$, a direct calculation shows that
\begin{equation*}
E_{l} +  \w_{l}p = \sum_{k=1}^d \frac{\partial }{\partial x_k}
\left( p[\v \v^T]_{lk} \right).
\end{equation*}

Substituting this in~\eqref{eqn:interm_up}, on taking the divergence
of both sides, we obtain
\begin{align}
-\nabla \cdot (pu) +\frac{1}{2}\sum_{l,k=1}^d \frac{\partial^2}{\partial x_l \partial
  x_{k}} \left( p [\v
 \v^T]_{lk} \right) &= \nabla \cdot (p\v) \hat{h}.
\label{eqn:FPK_23}
\end{align}
Using~\eqref{eqn:bvp_multi_Z_matrix} and~\eqref{eqn:FPK_23} in the forward equation~\eqref{eqn:mod_FPK},
\begin{align*}
\ud p & =  \clL^\dagger p + ( h-\hat{h} )^T
(\ud Z_t - \hat{h} \ud t)p.
\end{align*}
This is precisely the SDE \eqref{eqn:Kushner_eqn}, as desired.
\qed

\subsection{Proof of Theorem~\ref{thm:thm1}}
\label{apdx:uniqueness}

We omit the subscript ``$j$'' in this proof, writing just $\phi$ and
$h$.  We also suppress explicit dependence on time $t$, writing $p(x)$
instead of $p(x,t)$ and $\phi(x)$ instead of $\phi(x,t)$.  

Under Assumption~A1, $p$ is known to admit a spectral gap (or
Poincar\'{e} inequality) with constant $\lambda \ge
c_1$~\cite{Johnsen_2000}: That is, for all functions
$\phi\in H_0^1(\Re^d;p)$,
\begin{equation}
\int |\phi(x)|^2 p(x) \ud x \le \frac{1}{\lambda} \int |\nabla\phi(x)|^2 p(x)
\ud x.
\label{eqn:Poincare}
\end{equation}

Consider now the inner product
\[
<\phi,\psi>:=\int \nabla \phi(x) \cdot \nabla \psi(x) p(x) \ud x.
\]
On account of~\eqref{eqn:Poincare}, the norm defined by using the
inner product $<\cdot,\cdot>$ is equivalent to the standard norm in
$H^1(\Re^d;p)$.

\begin{romannum}
\item Consider the BVP in its weak form~\eqref{eqn:EL_phi_weak}.  The
  integral on the righthand-side is a bounded linear functional on
  $\psi \in H_0^1$, since
\begin{align*}
&|\int  (h(x)-\hat{h}) \psi(x) p(x) \ud x|^2  \nonumber\\
& \quad \le  \int
|h(x)-\hat{h}|^2 p(x) \ud x \int  |\psi(x)|^2 p(x) \ud x
\\
& \quad \le
(\text{const.}) \int  |\nabla \psi(x)|^2 p(x) \ud x,
\end{align*}
where~\eqref{eqn:Poincare} is used to obtain the second inequality.

It follows from the Reisz representation theorem that there exists a
unique $\phi \in H_0^1$ such that
\[
<\phi,\psi> =  \int  (h(x)-\hat{h}) \psi(x) p(x) \ud
x,
\]
for all $\psi \in H_0^1(\Re^d;p)$.  Thus $\phi$ is a weak solution
of the BVP, satisfying \eqref{eqn:EL_phi_weak}.

\item Suppose $\phi$ is a weak solution.  Using $\psi=\phi$
  in~\eqref{eqn:EL_phi_weak},
\begin{align*}
&\int |\nabla \phi|^2 p(x) \ud x  = \int  (h(x)-\hat{h}) \phi(x) p(x) \ud
x\\
& \le  \left( \int
|h(x)-\hat{h}|^2 p(x) \ud x\right)^{\frac{1}{2}} \left( \int  |\phi(x)|^2 p(x) \ud x \right)^{\frac{1}{2}}
\\ & \le
\left( \int
|h(x)-\hat{h}|^2 p(x) \ud x \right)^{\frac{1}{2}} \left( \frac{1}{\lambda} \int  |\nabla
\phi(x)|^2 p(x) \ud x\right)^{\frac{1}{2}}
\end{align*}
by \eqref{eqn:Poincare}. The estimate~\eqref{eqn:bound1} follows.

\item For the final estimate~\eqref{eqn:bound2}, we need:

\begin{lemma} \label{lem:secondderiv}
Under Assumptions A1-A4, the weak solution $\phi$ of the BVP \eqref{eqn:EL_phi_weak} belongs to $H^2(\Re^d;p)$, with
\begin{equation} \label{eqn:h2bound}
   \int |D^2 \phi|^2 p \ud x \leq  \int \nabla \phi \cdot G p \ud x
\end{equation}
where the vector function $G \in L^2(\Re^d;p)$ is defined by
\[
G = D^2(\log p) \nabla \phi + \nabla h
\]
and where $|D^2 \phi|^2= \sum_{j,k} (\frac{\partial^2 \phi}{\partial x_j \partial x_k})^2$.
\end{lemma}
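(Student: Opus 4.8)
The plan is to exploit the diffusion form of the equation. Dividing the strong form of~\eqref{eqn:EL_phi_weak} by $p$ gives $\clD\phi = -(h-\hat{h})$, with $\clD = \Delta - \nabla\smopot\cdot\nabla$ and $\smopot = -\log p$. The basic tool is the symmetry of $\clD$ in $L^2(\Re^d;p)$, namely $\int (\clD f)\, g\, p\, \ud x = -\int \nabla f\cdot\nabla g\, p\, \ud x$, which is precisely the weak formulation and carries no boundary terms because the weight $p=e^{-\smopot}$ decays. The goal is a Bochner (Bakry--\'{E}mery) identity relating $\int |D^2\phi|^2 p\,\ud x$ to the curvature term $\int \nabla\phi\cdot(D^2\smopot)\nabla\phi\, p\,\ud x$ and to $\int(\clD\phi)^2 p\,\ud x$; once this is in hand, substituting $\clD\phi = -(h-\hat{h})$ produces the claimed bound.

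I would carry out the algebra by the commutator method. Writing $\phi_i := \partial\phi/\partial x_i$, differentiate $\clD\phi = -(h-\hat{h})$ in $x_i$. Since $\partial_i$ commutes with $\Delta$ and picks up one lower-order term from $\partial_i(\nabla\smopot\cdot\nabla\phi)$, one finds $\clD\phi_i = (D^2\smopot\,\nabla\phi)_i - \partial_i h$. Pairing with $\phi_i$ in $L^2(\Re^d;p)$ and using the symmetry identity gives $-\int|\nabla\phi_i|^2 p\,\ud x = \int\big[(D^2\smopot\,\nabla\phi)_i - \partial_i h\big]\phi_i\, p\,\ud x$. Summing over $i$, and using $\sum_i |\nabla\phi_i|^2 = |D^2\phi|^2$, $\sum_i (D^2\smopot\,\nabla\phi)_i\,\phi_i = \nabla\phi\cdot(D^2\smopot)\nabla\phi$, and $\sum_i (\partial_i h)\phi_i = \nabla\phi\cdot\nabla h$, yields
\[
\int |D^2\phi|^2 p\,\ud x = \int \nabla\phi\cdot\nabla h\, p\,\ud x - \int \nabla\phi\cdot(D^2\smopot)\nabla\phi\, p\,\ud x .
\]
Because $D^2(\log p) = -D^2\smopot$, the right-hand side is exactly $\int \nabla\phi\cdot G\, p\,\ud x$, so~\eqref{eqn:h2bound} holds, formally with equality.

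The delicate point is that this computation presumes the very regularity ($\phi$ admitting enough derivatives to differentiate the PDE) that we are trying to establish. The honest route is a difference-quotient argument: in~\eqref{eqn:EL_phi_weak} take test functions built from discrete second differences of $\phi$ (a step $\delta$ in coordinate $x_k$), and bound the difference quotients of $\nabla\phi$ in $L^2(\Re^d;p)$ uniformly in $\delta$. Here Assumption~A3 controls the curvature term since $\|D^2\smopot\|_{L^\infty}<\infty$, Assumption~A4 controls $\int\nabla\phi\cdot\nabla h\, p\,\ud x$ via Cauchy--Schwarz together with the first bound~\eqref{eqn:bound1}, and the Poincar\'{e} inequality from~A1 absorbs the lower-order contributions. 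Uniform boundedness then forces $\nabla\phi\in H^1(\Re^d;p)$, i.e. $\phi\in H^2(\Re^d;p)$, and the $\delta\to 0$ limit of the difference-quotient relation reproduces the displayed identity.

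The main obstacle is controlling the behaviour at infinity on the unbounded domain $\Re^d$: every integration by parts above must be justified by a cutoff, multiplying by a smooth truncation $\eta_R$ and showing the commutator error terms vanish as $R\to\infty$, using the decay of $p$ together with the uniform Hessian bound of~A3. It is precisely this approximation step — not the algebra — that degrades the equality to the stated inequality $\le$, through weak lower semicontinuity of the map $\phi\mapsto \int|D^2\phi|^2 p\,\ud x$ under the limit in $R$ (and $\delta$). Assembling the uniform estimate, the membership $\phi\in H^2(\Re^d;p)$, and the limiting inequality completes the proof of the lemma.
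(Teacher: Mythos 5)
Your formal computation is exactly the paper's: differentiating the equation in $x_k$ and testing against $\partial\phi/\partial x_k$ is, after multiplying through by $p$, identical to the paper's differentiated divergence-form identity $-\nabla\cdot(p\,\nabla \tfrac{\partial\phi}{\partial x_k}) = G_k\,p$, and your Bochner/commutator identity $\clD\phi_i = (D^2\smopot\,\nabla\phi)_i - \partial_i h$ is the same equation in generator form. Where you diverge is only in the rigor scaffolding. The paper justifies the pointwise differentiation by citing interior elliptic regularity (Evans, Sec.~6.3) to get $\phi\in H^3_{loc}(\Re^d)$, which lets it skip your difference-quotient step entirely; it then handles the unbounded domain with a single rescaled bump $\beta(sx)^2$, letting $s\to 0$ and using monotone convergence on the left and dominated convergence on the right --- this is also exactly where the equality degrades to the stated $\le$, matching your diagnosis (the lower bound on the truncated left side survives the limit, while a priori finiteness of $\int|D^2\phi|^2 p\,\ud x$ is not yet known). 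Your proposed route (difference quotients in the weak formulation, cutoff $\eta_R$, lower semicontinuity) is a legitimate alternative and is self-contained, at the cost of redoing by hand the regularity work the paper outsources; the paper's version is more economical but leans on a local regularity theorem whose hypotheses (smoothness of $p$, from A1) must be checked. Both correctly identify A3 as controlling the curvature term and A4 plus \eqref{eqn:bound1} as controlling $\int\nabla\phi\cdot\nabla h\,p\,\ud x$, so I see no gap --- only an unexecuted (but standard and clearly flagged) difference-quotient step.
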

\begin{proof}
First note that each entry of the Hessian matrix $D^2(\log p)$ is bounded, by Assumption A3, and that $\nabla h \in L^2(\Re^d;p)$ by Assumption A4. Hence $G \in L^2(\Re;p)$.

Next, elliptic regularity theory \cite[Section 6.3 of the PDE text by Evans]{EVANS}
%
%
%
applied to the weak solution $\phi \in H^1(\Re^d;p)$ says that $\phi
\in H^3_{loc}(\Re^d)$.
Hence the partial differential equation holds pointwise:
\begin{equation} \label{eqn:pointwise}
- \nabla \cdot (p \nabla \phi) = (h - \hat{h}) p .
\end{equation}
We differentiate with respect to $x_k$ to obtain:
\begin{align*}
& - \nabla \cdot (p \nabla \frac{\partial \phi}{\partial x_k}) - \nabla (\frac{\partial \log p}{\partial x_k}) \cdot (p\nabla \phi) - \frac{\partial \log p}{\partial x_k} \nabla \cdot (p \nabla \phi) \\
& = \frac{\partial h}{\partial x_k} p + (h-\hat{h}) \frac{\partial \log p}{\partial x_k} p .
\end{align*}
The final terms on the left and right sides cancel, by equation \eqref{eqn:pointwise}. Thus the preceding formula becomes
\begin{equation} \label{eqn:pointwisederiv}
- \nabla \cdot (p \nabla \frac{\partial \phi}{\partial x_k}) = G_k p ,
\end{equation}
where $G_k$ denotes the $k$th component of $G(x)$.

Let $\beta(x) \geq 0$ be a smooth, compactly supported ``bump'' function, meaning $\beta(x)$ is radially decreasing with $\beta(0)=1$. Let $s>0$ and multiply \eqref{eqn:pointwisederiv} by $\beta(sx)^2 \frac{\partial \phi}{\partial x_k}$. Integrate by parts on the left side (noting the boundary terms vanish because $\beta$ has compact support) to obtain 
\[
\int \nabla [\beta(sx)^2 \frac{\partial \phi}{\partial x_k}] \cdot (\nabla \frac{\partial \phi}{\partial x_k}) p \ud x = \int \beta(sx)^2 \frac{\partial \phi}{\partial x_k} G_k p \ud x .
\]
The right hand side $\text{RHS} \to \int \frac{\partial \phi}{\partial x_k} G_k p \ud x$ by dominated convergence as $s \to 0$, since $\beta(0)=1$. The left side,
\begin{align*}
\text{LHS}
& = \int \beta(sx)^2 |\nabla \frac{\partial \phi}{\partial x_k}|^2 p \ud x \\
& \quad + 2s \int \frac{\partial \phi}{\partial x_k} \beta(sx) (\nabla \beta)(sx) \cdot (\nabla \frac{\partial \phi}{\partial x_k}) p \ud x .
\end{align*}
Clearly the second term is bounded by
\begin{align*}
& 2s \lVert \nabla \beta \rVert_{L^\infty(\Re^d)} \int |\frac{\partial \phi}{\partial x_k}| \beta(sx) |\nabla \frac{\partial \phi}{\partial x_k}| p \ud x \\
& \leq s \lVert \nabla \beta \rVert_{L^\infty(\Re^d)} \int \left[(\frac{\partial \phi}{\partial x_k})^2 + \beta(sx)^2 |\nabla \frac{\partial \phi}{\partial x_k}|^2 \right] p \ud x
\end{align*}
and so
\begin{align*}
& (1-s \lVert \nabla \beta \rVert_{L^\infty(\Re^d)}) \int \beta(sx)^2 |\nabla \frac{\partial \phi}{\partial x_k}|^2 p \ud x \\
- & s \lVert \nabla \beta \rVert_{L^\infty(\Re^d)} \int (\frac{\partial \phi}{\partial x_k})^2 \ud x \leq \text{LHS} .
\end{align*}
Letting $s \to 0$ in LHS and RHS, and recalling that $\beta(x)$ is radially decreasing, we conclude from the monotone convergence theorem that
\[
\int |\nabla \frac{\partial \phi}{\partial x_k}|^2 p \ud x \leq \int \frac{\partial \phi}{\partial x_k} G_k p \ud x .
\]
Summing over $k$ completes the proof of the lemma.
\end{proof}

Next we prove \eqref{eqn:bound2}. We will use several times that
\begin{equation} \label{eqn:interim}
\int |\nabla \phi|^2 p \ud x \leq \frac{1}{\lambda} \int |h-\hat{h}|^2
p \ud x \leq \frac{1}{\lambda^2} \int |\nabla h|^2 p \ud x ,
\end{equation}
by \eqref{eqn:bound1} followed by \eqref{eqn:Poincare} applied to the function $h-\hat{h} \in H^1_0(\Re^d;p)$.

We have
\begin{align*}
\int |(D^2 \phi)(\nabla \phi)| p \ud x
\leq \int |D^2 \phi| |\nabla \phi| p \ud x \\
\leq (\frac{1}{\lambda^2} \int |\nabla h|^2 p \ud x)^{3/4} (\int |G|^2 p \ud x)^{1/4}
\end{align*}
by Lemma~\ref{lem:secondderiv}, Cauchy--Schwarz, and \eqref{eqn:interim}. The definition of $G$, the $L^2$-triangle inequality and \eqref{eqn:interim} show that
\begin{align*}
& (\int |G|^2 p \ud x)^{1/2} \\
& \leq \lVert D^2(\log p) \rVert_{L^\infty} (\int |\nabla \phi|^2 p \ud x)^{1/2} + (\int |\nabla h|^2 p \ud x)^{1/2} \\
& \leq \left( \frac{\lVert D^2(\log p) \rVert_{L^\infty}}{\lambda} + 1 \right) (\int |\nabla h|^2 p \ud x)^{1/2} .
\end{align*}
The estimate \eqref{eqn:bound2} now follows.
\end{romannum}





\bibliographystyle{plain}
\bibliography{strings,ACCPF,ACCJPDAFPF,CDC11,markov,refmtt,q}
\end{document}